\newcommand{\flip}[1]{\raisebox{\depth}{\rotatebox{180}{#1}}}
\newcommand{\board}[9]{
\begin{tabular}{|c|c|c|}\hline
#1 & #2 & #3\\\hline
#4 & #5 & #6\\\hline
#7 & #8 & #9\\\hline
\end{tabular}}
\definecolor{myred}{rgb}{0.75,0,0}
\definecolor{mygreen}{rgb}{0,0.75,0}
\definecolor{myblue}{rgb}{0,0,0.75}
\newcommand{\one}{{\color{myred} 1}}
\newcommand{\two}{{\color{myred} 2}}
\newcommand{\three}{{\color{myred} 3}}
\newcommand{\four}{{\color{mygreen} 4}}
\newcommand{\five}{{\color{mygreen} 5}}
\newcommand{\six}{{\color{mygreen} 6}}
\newcommand{\seven}{{\color{myblue} 7}}
\newcommand{\eight}{{\color{myblue} 8}}
\newcommand{\nine}{{\color{myblue} 9}}
\newcommand{\idboard}{\board{\one}{\two}{\three}{\four}{\five}{\six}{\seven}{\eight}{\nine}}
\newcommand{\spin}{{\rm Spin}}
\newcommand{\spinmn}{\spin_{m\times n}}
\newcommand{\sid}{\iota}
\newcommand{\bid}{{\bf 0}}
\newcommand{\be}{{\bf e}}
\newcommand{\bu}{{\bf u}}
\newcommand{\bv}{{\bf v}}
\newcommand{\RR}{\mathcal{R}}
\newcommand{\Rij}{\RR_{i\times j}}
\newcommand{\Rji}{\RR_{j\times i}}
\newcommand{\Z}{\mathbb{Z}}
\renewcommand{\SS}{\mathcal{S}}
\newcommand{\TT}{\mathcal{T}}
\newcommand{\UU}{\mathcal{U}}
\newcommand{\Sij}{\SS_{i\times j}}
\newcommand{\Sonetwo}{\SS_{1\times 2}}
\newcommand{\Soneone}{\SS_{1\times 1}}
\newcommand{\Stwotwo}{\SS_{2\times 2}}
\newcommand{\Sonethree}{\SS_{1\times 3}}
\newcommand{\Stwothree}{\SS_{2\times 3}}
\newcommand{\Sthreethree}{\SS_{3\times 3}}
\newcommand{\dist}{\rho}
\newcommand{\cay}{\operatorname{Cay}}
\newcommand{\wt}{\operatorname{wt}}
\newcommand{\projmap}{\pi}
\newtheorem{theorem}{Theorem}
\newtheorem{definition}{Definition}
\newtheorem{proposition}{Proposition}
\newtheorem{corollary}{Corollary}
\newtheorem{lemma}{Lemma}
\title[The Mathematics of Spinpossible]{The Mathematics of Spinpossible}
\author{Alex Sutherland and Andrew Sutherland}
\date{}
\begin{document}
\maketitle
\section{Introduction}
Spinpossible$^\text{\tiny\texttrademark}$ is played on $3\times 3$ board of tiles numbered from 1 to~9, each of which my be right-side-up or up-side-down.
One possible starting position is the board:
\medskip
\begin{center}
\board{\flip{\five}}{\flip{\four}}{\flip{\nine}}{\flip{\two}}{\flip{\one}}{\flip{\six}}{\seven}{\eight}{\flip{\three}}
\end{center}
\medskip
The objective of the game is to return the board to the standard configuration:
\medskip
\begin{center}
\idboard
\end{center}
\medskip
This is accomplished by a sequence of \emph{spins}, each of which rotates a rectangular region of the board by $180^\circ$.
The goal is to minimize the number of spins used.  The starting board above may be solved using two spins:
\medskip
\begin{center}
\board{\flip{5}}{\flip{4}}{\flip{\nine}}{\flip{2}}{\flip{1}}{\flip{\six}}{\seven}{\eight}{\flip{\three}}$\quad\Longrightarrow\quad$
\board{\one}{\two}{\flip{9}}{\four}{\five}{\flip{6}}{\seven}{\eight}{\flip{3}}$\quad\Longrightarrow\quad$\idboard
\end{center}
\medskip
In this example, the first spin rotates the the $2\times 2$ rectangle in the top left, and the second spin rotates the $3\times 1$ rectangle along the right edge.
You can play the game online at \url{http://spinpossible.com}

In these notes we give a mathematical description of this game, and some of its generalizations, and consider various questions that naturally arise.
Perhaps the most obvious is this: is it always possible to return a given board to the standard configuration with a sequence of spins?
We shall see shortly that the answer is yes.

A more difficult question is the following: what is the maximum number of spins required to solve any board?
An exhaustive search has found that 9 spins are always sufficient (and sometimes necessary), but no short proof of this fact is known.

\section{A mathematical description of the game}

We begin by defining the group $\spinmn$, for a fixed pair of positive integers $m$ and $n$  with product $N=mn$.
Let $S_N$ denote the symmetric group on~$N$ letters with the action on the right (so the permutation $\alpha\beta$ applies $\alpha$ and then~$\beta$), and 
let $V_N=(\Z/2\Z)^N$ denote the additive group of $N$-bit vectors.
%Let $\sid$ denote the trivial permutation, let $\bid$ denote the zero vector, and let $\bone$ denote the vector of all $1$s.
For any vector $\bv=(v_1,\ldots,v_N)\in V_N$ and permutation $\alpha\in S_N$, we use $\bv^\alpha=(v_{\alpha^{-1}(1)},\ldots,v_{\alpha^{-1}(N)})$ to denote the vector obtained by applying $\alpha$ to $\bv$.

\begin{definition}
The group $\spinmn$ is the set $\{(\alpha,\bu)\colon \alpha\in S_{mn}, \bu\in V_{mn}\}$ under the operation $(\alpha,\bu)(\beta,\bv) = (\alpha\beta,\bu^\beta+\bv)$.
Equivalently, $\spinmn$ is the wreath product $\Z/2\Z\wr S_N$.
\end{definition}
\noindent
%The inverse of  $(\alpha,\bu)$ is $(\alpha^{-1},\bu^{\alpha^{-1}})$, 
%and the identity element is $\spinid$, where $\sid$ is the trivial permutation and $\bid$ is the zero vector. 

Readers familiar with Coxeter groups will recognize $\spinmn$ as the hyperoctahedral group of degree $N$ (the symmetry group of both the $N$-cube and the $N$-octahedron), equivalently, the Weyl group of type $B_N$ (and $C_N$).   This group can also be represented using signed permutation matrices, but the representation as a wreath product is better suited to our purposes here.  The definition of the group $\spinmn$ depends only on $N=mn$, however the integers $m$ and $n$ determine the set of generators we will be defining shortly, and these play a key role in the game (playing Spinpossible on a $1\times 9$ board would be much less interesting!).

A \emph{board} is an $m\times n$ array of uniquely identified elements called \emph{tiles}, which we number from $1$ to $mn$.
Each tile may be oriented positively (right-side-up), or negatively (upside-down).
The \emph{positions} of the board are fixed locations, which for convenience we regard as unit squares in the plane, also numbered from $1$ to $mn$, starting at the top left and proceeding left to right, top to bottom.
The \emph{standard board} has tile $i$ in position $i$, with positive orientation.

There is a 1-to-1 correspondence between $m\times n$ boards and elements of $\spinmn$, but we will generally think of elements of $\spinmn$ as acting on the set of all $m\times n$ boards as follows: the element $(\alpha,\bv)$ first permutes the tiles by moving the tile in position $i$ to position $\alpha(i)$, and then reverses the orientiation of the tile in the $i$th position if and only if $v_i=1$.
Of course this is just the action of $\spinmn$ on itself.

The projection map $\pi\colon\spinmn\to S_N$ that sends $(\alpha,\bu)$ to $\alpha$ is a group homomorphism, and we have the short exact sequence
\[
1\quad\longrightarrow\quad V_N\quad\longrightarrow\quad\spinmn\quad\longrightarrow\quad S_N\quad\longrightarrow\quad 1.
\]
It is worth emphasizing that the projection from $\spinmn$ to $V_N$ is \emph{not} a group homomorphism (for $N>1$).
%For example, consider the element $x=\bigl((1\ 2),10\bigr)$ in $\spin_{1\times 2}$.
%We have $x^2 = \bigl(\sid,11\bigr)$, but $\pi_2(x)\pi_2(x) = \bid \ne \pi_2(x^2)$.

%\begin{proposition}
%For $N=mn \ne 2$, the center of $\spinmn$ is a subgroup of order $2$, generated by $(\sid,{\bf 1})$.
%\end{proposition}
%\begin{proof}
%Let $(\alpha,\bu)$ be any element of $\spinmn$.  We have
%\[
%(\sid,\bone)(\alpha,\bu)(\sid,\bone)^{-1} = (\alpha,\bone^\alpha+\bu)(\sid,\bone) = (\alpha, %\bone+\bu+\bone) = (\alpha,\bu),
%\]
%thus $(\sid,{\bf 1})$ lies in the center of $\spinmn$. 
%Now let $(\beta,\bv)$ be an element of the center of $\spinmn$.
%Then $\beta$ lies in the center of $S_N$, but the center of $S_N$ is trivial for all $N\ne 2$, so %$\beta = \sid$.
%Thus we have
%\[
%(\beta,\bv)(\alpha,\bu)(\beta,\bv)^{-1} = (\sid,\bv)(\alpha,\bu)(\sid,\bv)^{-1} = (\alpha,%\bv^\alpha+\bu)(\sid,\bv) = (\alpha,\bv^\alpha+\bu+\bv),
%\]
%which implies that $\bv^\alpha+\bv = \bid$, since conjugation by $(\beta,\bv)$ must fix $(\alpha,\bu)$.
%This applies to every $\alpha\in S_N$, hence $\bv$ is either $\bid$ or $\bone$.
%\end{proof}
%For $N > 2$ this implies that $\spinmn$ is \emph{not} isomorphic to the direct product %$S_N\times V_N$ (whose center isomorphic to $V_N$), even though it is defined on the same set.\footnote{Even when $N=2$ the groups are not isomorphic: $\spin_{1\times 2}$ is the dihedral %group of order 8, while $S_2\times B_2$ is $(\Z/2\Z)^3$.}

We now distinguish the elements of $\spinmn$ that correspond to \emph{spins}, the moves permitted in the game.
A \emph{rectangle} $R$ specifies a rectangular subset of the positions on an $m\times n$ board, and has dimensions $i\times j$, with $1\le i\le m$ and $1\le j\le n$.
A spin rotates some rectangle by $180^\circ$.
It is reasonably clear what this means, but to make it more precise we define a notion of distance that will be useful later.

The \emph{distance} $\dist(p_1,p_2)$ between positions $p_1$ and $p_2$ is measured by applying the $\ell_1$-norm to the centers of the corresponding unit squares.
Two positions are \emph{adjacent} when they have a single edge in common, equivalently, when the distance between them is 1.
For a position $p$ and a rectangle $R$, we use $\dist(p,R)$ to denote the distance from the center of $p$ to the center of $R$ (again using the $\ell_1$-norm).

\begin{definition}
The \emph{spin about} $R$ is the element of $\spinmn$ that transposes the tiles in positions $p_1,p_2\in R$ if and only if $\dist(p_1,p_2)=2\dist(p_1,R)=2\dist(p_2,R)$, and then reverses the orientation of each tile in $R$.
\end{definition}

We say that an element of $\spinmn$ is a \emph{spin} if it is a spin about some rectangle~$R$.
The proposition below records some useful facts about spins.
The proofs are straight-forward, but for the sake of completeness we fill in the details.  They can (and probably should) be skipped on a first reading.

\begin{proposition}\label{prop:properties}
Let $s_1$ and $s_2$ be spins about rectangles $R_1$ and $R_2$ respectively.
\begin{enumerate}
\item
$s_1$ is its own inverse (as is $s_2$).
\item
$s_1s_2$ is not a spin.
\item
$s_1s_2=s_2s_1$ if and only if $R_1$ and $R_2$ are disjoint or have a common center.
\item
$s_1s_2s_1$ is a spin $s_3$ if and only if either $s_1$ and $s_2$ commute or $R_1$ contains~$R_2$.
The rectangle of $s_3$ has the same shape as $R_2$.
\end{enumerate}
\end{proposition}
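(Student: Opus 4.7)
The plan is to prove all four parts by direct calculation in the wreath-product normal form, writing each spin $s_i = (\alpha_i, \bv_i)$, where $\alpha_i$ is the involution realising the $180^\circ$ rotation of $R_i$ (a product of disjoint transpositions of symmetric pairs, with the centre cell fixed if both dimensions of $R_i$ are odd) and $\bv_i$ is the indicator vector of $R_i$. The key facts to use repeatedly are: each $\alpha_i$ stabilises $R_i$ setwise, so $\bv_i^{\alpha_i} = \bv_i$; more generally, $\bv_i^{\beta}$ has support $\beta(R_i)$ for any $\beta \in S_N$; and an element $(\alpha, \bv)$ of $\spinmn$ is a spin if and only if $\mathrm{supp}(\bv)$ is a rectangle $R$ with $\alpha$ acting as the $180^\circ$ rotation of $R$.

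Part (1) is then immediate: $s^2 = (\alpha^2, \bv^\alpha + \bv) = \spinid$. For the easy direction of part (3), I would verify that when $R_1 \cap R_2 = \emptyset$ both permutations have disjoint supports and $\bv_i^{\alpha_j} = \bv_i$, while when $R_1, R_2$ share a common centre $c$ both rectangles are $180^\circ$-symmetric about $c$, so each $\alpha_j$ stabilises $R_i$ setwise and the partial rotations commute by a four-case check on the strata $R_1 \setminus R_2$, $R_2 \setminus R_1$, $R_1 \cap R_2$, and the complement. The converse unfolds the commutation identity $\alpha_2(R_1) \triangle R_2 = \alpha_1(R_2) \triangle R_1$ together with $\alpha_1 \alpha_2 = \alpha_2 \alpha_1$; a geometric case analysis on the overlap then forces either $R_1 \cap R_2 = \emptyset$ or a common centre.

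For part (2), I would compute $s_1 s_2 = (\alpha_1 \alpha_2,\ \bv_1^{\alpha_2} + \bv_2)$ and observe that its flip-vector support equals $\alpha_2(R_1) \triangle R_2$; a case split on whether $R_1, R_2$ are equal, disjoint, nested, or partially overlapping shows this set and the permutation $\alpha_1 \alpha_2$ cannot simultaneously match the spin characterisation. For part (4), using $s_1^{-1} = s_1$, I would write $s_1 s_2 s_1 = (\alpha_1 \alpha_2 \alpha_1,\ \bv_1^{\alpha_2 \alpha_1} + \bv_2^{\alpha_1} + \bv_1)$. When $s_1, s_2$ commute this collapses to $s_2$; when $R_2 \subseteq R_1$ the rectangle $\alpha_1(R_2)$ is congruent to $R_2$, the conjugate $\alpha_1 \alpha_2 \alpha_1$ is its $180^\circ$ rotation (by a direct check, using that $\alpha_1$ maps $R_2$ into $R_1$), and the three flip-vector terms cancel to leave the indicator of $\alpha_1(R_2)$, so $s_3$ is the spin about $\alpha_1(R_2)$. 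In all remaining configurations (such as $R_1 \subsetneq R_2$ without a common centre, or partial overlap with neither containment), one checks that either $\alpha_1 \alpha_2 \alpha_1$ no longer rotates a rectangle by $180^\circ$ or the flip-vector support fails to be rectangular.

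I expect the main obstacle throughout to be the geometric case analyses underlying part (2), the converse of (3), and the converse of (4): each individual configuration is elementary, but one must pinpoint the specific obstruction---a stray cell spoiling a would-be rectangle, or a mismatched centre of rotation---in every way two rectangles can sit inside the grid.
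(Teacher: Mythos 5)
Your plan is sound and takes a genuinely different route from the paper. You work algebraically in the wreath-product normal form, computing $s_1s_2 = (\alpha_1\alpha_2,\ \bv_1^{\alpha_2}+\bv_2)$ and $s_1s_2s_1 = (\alpha_1\alpha_2\alpha_1,\ \bv_1^{\alpha_2\alpha_1}+\bv_2^{\alpha_1}+\bv_1)$ and then testing whether the flip-vector support is a rectangle on which the permutation component acts as the $180^\circ$ rotation. The paper instead reasons geometrically about composites of plane rotations (two rotations by $\pi$ compose to a translation; three compose to another rotation by $\pi$), and for the ``hard'' directions of (2), (3), and (4) it tracks a single well-chosen \emph{witness} tile---for example, a corner of $R_1$ outside $R_2$, or a position in $R_2\setminus R_1$ that $s_2$ carries into $R_1$---whose image or orientation under the alleged spin yields an immediate contradiction. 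The benefit of your algebraic framework is that the easy directions (your parts (1), the ``disjoint or common centre $\Rightarrow$ commute'' half of (3), and the $R_2\subseteq R_1$ case of (4)) fall out by clean bookkeeping: in particular your observation that when $R_2\subseteq R_1$ the three flip-vector terms cancel to the indicator of $\alpha_1(R_2)$, with $\alpha_1\alpha_2\alpha_1$ the conjugated rotation, is a tidy replacement for the paper's ``product of three rotations by $\pi$'' argument. The cost, which you rightly flag, is that the converse directions still require a geometric case analysis, and some of the edge cases are not handled by the flip-vector support alone: for part (2), two disjoint rectangles whose union is a larger rectangle, or a nested pair whose set difference happens to be a rectangle, have rectangular support and are ruled out only by inspecting the permutation component. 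The paper's witness-tile trick sidesteps this classification entirely, which is why its proof of (2) and of the converses of (3) and (4) is shorter. Both are valid; yours is more systematic, the paper's is more economical.
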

\begin{proof}
(1) is clear.
For (2), suppose $s_3=s_1s_2$ is a spin about some rectangle $R_3$.  Then $s_1\ne s_2$, and therefore $R_1\ne R_2$, since the identity element is not a spin.
Now suppose there exist positions $p_1\in R_1-R_2$ and $p_2\in R_2-R_1$.  Then $s_3$ moves the tile in position $p_i$ to the same location that $s_i$ does, which implies that $R_i$ and $R_3$ have a common center, for $i=1,2$.
But then there is a position in $R_3$ containing this common center (either in its center or along an edge) and $s_3=s_1s_2$ does not change the orientation of the tile in this position, which is a contradiction.
Now assume without loss of generality that $R_1$ properly contains $R_2$.
Let $p_1$ and $p_2$ be corners of $R_1$ not contained in $R_2$ (let $p_1=p_2$ if $R_1$ has width or height 1, and $p_1\ne p_2$ otherwise).
Then $s_3$ acts on the tiles in positions $p_1$ and $p_2$ the same way that $s_1$ does, and this implies that $R_3$ and $R_1$ have a common center and that $R_1\subset R_3$.
But $R_3$ must lie in the union of $R_1$ and $R_2$, so $R_3=R_1$ and $s_3=s_1$, but then $s_2$ is the identity, which is again a contradiction.  So $s_1s_2$ is not a spin.

We now address (3).
For any position $p$ not in the interesection of $R_1$ and $R_2$, both $s_1s_2$ and $s_2s_1$ have the same effect on the tile $t$ in position $p$.
Now suppose $p$ is in the intersection of $R_1$ and $R_2$.
Then the orientation of $t$ is preserved by both $s_1s_2$ and $s_2s_1$, so we need only consider the position to which $t$ is moved.
The product of two rotations by $\pi$ is a translation (possibly trivial).
Reversing the order of the rotations yields the inverse translation, thus $t$ is moved to the same position if and only if the translation is trivial, which occurs precisely when $R_1$ and $R_2$ have a common center.
This proves (3).

For (4), it is clear that if $s_1$ and $s_2$ commute then $s_3=s_2$ is a spin.
Now suppose $R_1$ contains $R_2$.  Let $R_3$ be the inverse image of $R_2$ under the permutation $\projmap(s_1)$, and let $s_3=s_1s_2s_1$.
For tiles in $R_3$, the action of $s_3$ is the product of three rotations by $\pi$, which is again a rotation by $\pi$, and the center of this rotation is the center of $R_3$.
Thus $s_3 $is a spin about $R_3$, which has the same shape as $R_2$.

To prove the other direction of (4), suppose for the sake of contradiction that $s_3$ is a spin about some rectangle $R_3$, that $R_1$ and $R_2$ are not disjoint, do not have a common center, and that $R_1$ does not contain $R_2$.  These assumptions guarantee the existence of a position $p \in R_2-R_1$ whose image under $\projmap(s_2)$ is in~$R_1$.
The action of $s_3$ on the tile $t$ in position $p$ is the same as $s_2s_1$, which is two rotations by $\pi$, hence a translation.  But $R_1$ and $R_2$ do not have a common center, so this translation is non-trivial and $s_3$ moves tile $t$ without changing its orientation, contradicting our assumption that $s_3$ is a spin.
\end{proof}

Each spin is uniquely determined by its rectangle $R$, thus we may specify a spin in the form $[p_1,p_2]$, where $p_1$ and $p_2$ identify the positions of the upper left and lower right corners of $R$ (respectively).
For example, on a $3\times 3$ board the spin about the $2\times 2$ rectangle in the upper right corner is
\[
[2,6] = \bigl((2\ 6)(3\ 5),011011000\bigr).
\]
The moves permitted in a game of Spinpossible on an $m\times n$ board are precisely the set $\SS=\SS(m,n)$ of all spins $[p_1,p_2]$, where $1\le p_1\le p_2\le mn$
(we consider variations of the game that place restrictions on $\SS$ in \S\ref{section:restrictions}).

In mathematical terms, the game works as follows: given an element $b\in\spinmn$ (the starting board), write $b^{-1}$ as a product $s_1s_2s_3\cdots,s_k$ of elements in $\SS$, with~$k$ as small as possible (a \emph{solution}).
Applying the spins $s_1, s_2, \ldots, s_k$ to $b$ then yields the identity (the standard board).
In general there will be many solutions to $b$, but some boards have a unique solution; this topic is discussed further in \S\ref{section:unique}.

Let $\Rij$ denote the subset of $\spinmn$ that are spins about an $i\times j$ rectangle.
The set $\Rij$ is necessarily empty if $i > m$ or $j > n$, and we may have $\Rij = \emptyset$ even when $\Rji\ne\emptyset$ (although this can occur only when $m\ne n$).
In these notes we shall always consider the sets $\Rij$ and $\Rji$ together, thus we define $\Sij=\Rij\cup\Rji$.
The set of spins $\SS$ in $\spinmn$ is then the union of the sets $\Sij$, each of which we refer to as a \emph{spin type}.

\begin{proposition}\label{prop:cardinality}
Assume $1\le i \le m$, $1\le j\le n$, and $m\le n$.  The following hold:
\begin{enumerate}
\item
$|\Rij| = (m+1-i)(n+1-j)$.
\smallskip
\item
$
|\Sij| = \begin{cases}
|\Rij|+|\Rji|\qquad &\text{for } i\ne j,\\
|\Rij| &\text{for } i=j.\\
\end{cases}
$
\medskip
\item
$|\SS| = \binom{m+1}{2}\binom{n+1}{2}.$
\medskip
\item
There are $\frac{1}{2}m(2n-m+1)$ distinct spin types $\Sij$ in $\spinmn$.
\end{enumerate}
\end{proposition}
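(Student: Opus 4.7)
The plan is to prove the four parts by straightforward combinatorial enumeration, leveraging the key fact (noted just before the proposition) that a spin is uniquely determined by its rectangle, so counting spins reduces to counting rectangles of the relevant shapes.

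For (1), I would count $i\times j$ rectangles inside an $m\times n$ board by locating the upper-left corner, which can sit in any of $m+1-i$ rows and $n+1-j$ columns, giving the product. For (2), the observation that a spin determines its rectangle means that two spins about rectangles of different shapes must be distinct; hence $\Rij\cap\Rji=\emptyset$ when $i\ne j$ (an $i\times j$ rectangle is not a $j\times i$ rectangle), while trivially $\Rij=\Rji$ when $i=j$, from which (2) is immediate. Summing (1) over all admissible $(i,j)$ then gives (3):
\[
|\SS| \;=\; \sum_{i=1}^{m}\sum_{j=1}^{n}(m+1-i)(n+1-j),
\]
and the double sum factors into a product of independent one-variable sums, evaluating to $\binom{m+1}{2}\binom{n+1}{2}$.

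For (4), a little more care is needed: since $\Sij = \SS_{j\times i}$ by definition, the distinct spin types are indexed by \emph{unordered} pairs $\{i,j\}$ such that $\min(i,j)\le m$ and $\max(i,j)\le n$ (using the hypothesis $m\le n$). Parametrising these as pairs $(a,b)$ with $a\le b$, $a\in\{1,\ldots,m\}$, $b\in\{a,\ldots,n\}$, the count reduces to $\sum_{a=1}^{m}(n-a+1)$, which a short manipulation (or the identity $\binom{n+1}{2}-\binom{n-m+1}{2}$) identifies with $\tfrac{1}{2}m(2n-m+1)$.

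The only real pitfall is (4): one must remember that $\Sij$ and $\SS_{j\times i}$ denote the same set, so counting distinct spin types means counting unordered rather than ordered pairs, and one must respect the asymmetric bounds $\min\le m$, $\max\le n$ in the summation. Parts (1)--(3) are entirely routine once the rectangle-count in (1) is in hand.
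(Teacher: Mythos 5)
Your proof is correct and takes essentially the same approach as the paper: count rectangles by upper-left corner for (1), observe that a spin determines its rectangle (hence its shape) for (2), sum and factor for (3), and count unordered pairs $\{i,j\}$ with $\min\le m$, $\max\le n$ for (4). The only difference is cosmetic — you supply a short justification for the disjointness claim in (2) where the paper merely says ``immediate,'' and you offer the binomial identity $\binom{n+1}{2}-\binom{n-m+1}{2}$ as an alternative route to the closed form in (4).
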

\begin{proof}
For (1), we note that there are $(m+1-i)(n+1-j)$ possible locations for the upper left corner of an $i\times j$ rectangle on an $m\times n$ board.
The formula in (2) is immediate.  For (3) we have
\[
\sum_{i=1}^m\sum_{j=1}^n(m+1-i)(n+1-j) = \sum_{i=1}^m\sum_{j=1}^n ij = \binom{m+1}{2}\binom{n+1}{2},
\]
and for (4) we have
\[
\sum_{i=1}^m\sum_{j=i}^n 1 = \sum_{i=1}^m (n+1-i) = m(n+1)-\binom{m+1}{2} = \frac{m(2n-m+1)}{2}.
\]
\end{proof}

It is well known that the symmetric group $S_N$ is generated by the set of all transpositions (permutations that swap two elements and leave the rest fixed).
Slightly less well known is the fact that $S_N$ is generated by any set of transpositions that form a connected graph, as described in the following lemma.

\begin{lemma}\label{lemma:transpositions}
Let $E\subseteq S_N$ be a set of transpositions $(v_i,v_j)$ acting on a set of vertices $V=\{v_1,\ldots,v_N\}$.
Let $G$ be the undirected graph on $V$ with edge set $E$.
Then $E$ generates $S_N$ if and only if $G$ is connected.
\end{lemma}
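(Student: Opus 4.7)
The plan is to prove both directions separately, with the forward direction handled by a simple invariance argument and the reverse direction by an inductive construction of arbitrary transpositions from the edges of $G$.

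For the ``only if'' direction, I would argue by contrapositive: suppose $G$ is disconnected, and let $C \subsetneq V$ be the vertex set of one connected component. Every transposition in $E$ swaps two vertices lying in the same component of $G$, and hence in particular stabilizes $C$ setwise. The set of permutations stabilizing $C$ is a subgroup of $S_N$, so $\langle E\rangle$ is contained in this proper subgroup. Since any transposition $(v_i,v_j)$ with $v_i\in C$ and $v_j\notin C$ fails to stabilize $C$, such a transposition is not in $\langle E\rangle$, so $E$ does not generate $S_N$.

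For the ``if'' direction, assume $G$ is connected. Since $S_N$ is generated by all transpositions, it suffices to show that every transposition $(v_i,v_j)$ lies in $\langle E\rangle$. I would proceed by induction on the length $k$ of a shortest path $v_i = u_0, u_1, \ldots, u_k = v_j$ in $G$ (which exists by connectedness). The base case $k = 1$ is immediate since $(u_0,u_1) \in E$. For the inductive step, the identity
\[
(u_0,u_{k-1})(u_{k-1},u_k)(u_0,u_{k-1}) = (u_0,u_k),
\]
which is a direct computation (apply both sides to $u_0$, $u_{k-1}$, $u_k$, and any other point), expresses $(u_0,u_k)$ as a product of $(u_0,u_{k-1})$, which lies in $\langle E\rangle$ by the inductive hypothesis applied to the path of length $k-1$, and the edge transposition $(u_{k-1},u_k)\in E$. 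Hence $(v_i,v_j)=(u_0,u_k)\in\langle E\rangle$, completing the induction.

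The proof is almost entirely routine; the only ``step to get right'' is the conjugation identity, but this can be verified by evaluating both sides on the three relevant points. No serious obstacle is anticipated.
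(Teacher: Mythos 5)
Your proof is correct and takes essentially the same approach as the paper: both directions rest on the correspondence between connected components of $G$ and orbits of $\langle E\rangle$, and the reverse direction builds an arbitrary transposition from a path in $G$. The only cosmetic difference is that the paper writes out the palindromic product $e_1\cdots e_{k-1}e_ke_{k-1}\cdots e_1$ along the whole path at once, whereas you unroll it one conjugation at a time by induction on path length; these are the same construction.
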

\begin{proof}
It suffices to show that $E$ generates every transposition in $S_N$.
If the sequence of edges $(e_1,\ldots,e_k)$ is a path from $v_i$ to $v_j$ in $G$, then the permutation
\[
e_1e_2\ldots e_{k-2}e_{k-1}e_ke_{k-1}e_{k-2}\ldots e_2e_1
\]
is the transposition $(v_i,v_j)$.
Let $H$ be the subgroup of $S_N$ generated by~$E$.
The $H$-orbits of $V$ correspond to connected components of $G$, and $H$ can achieve any permutation of the vertices in a given component, since it can transpose any pair of vertices connected by a path.
Thus $H=S_N$ if and only if $G$ is connected.
\end{proof}

\begin{corollary}\label{cor:generates}
$\Soneone \cup \Sonetwo$ generates $\spinmn$.
\end{corollary}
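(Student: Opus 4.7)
The plan is to exploit the short exact sequence $1\to V_N\to \spinmn\to S_N\to 1$ and show that the subgroup $H\le\spinmn$ generated by $\Soneone\cup\Sonetwo$ contains the kernel $V_N=\{(\sid,\bv):\bv\in V_N\}$ and projects onto $S_N$ under $\projmap$; this forces $H=\spinmn$.

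First I would observe that each single-square spin in $\Soneone$ has the form $(\sid,\be_p)$ where $\be_p$ is the indicator of position $p$. Products of these give every element of $\{(\sid,\bv):\bv\in V_N\}$, so $V_N\subseteq H$.

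Next I would note that a spin in $\Sonetwo$ about adjacent positions $p_1,p_2$ is the element $((p_1\ p_2),\be_{p_1}+\be_{p_2})$. A short calculation using the wreath product law $(\alpha,\bu)(\beta,\bv)=(\alpha\beta,\bu^\beta+\bv)$ shows that multiplying such a spin by the two single-square spins $(\sid,\be_{p_1})$ and $(\sid,\be_{p_2})$ from $\Soneone$ cancels the orientation part and yields the pure transposition $((p_1\ p_2),\bid)\in H$. The positions of the $m\times n$ board, with horizontal and vertical neighbor pairs as edges, form the grid graph, which is obviously connected. Lemma~\ref{lemma:transpositions} then shows that the pure adjacent transpositions generate the subgroup $\{(\alpha,\bid):\alpha\in S_N\}\cong S_N$, so $H$ projects onto $S_N$.

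Finally, any $(\alpha,\bv)\in\spinmn$ factors as $(\alpha,\bid)(\sid,\bv)$, where both factors lie in $H$ by the preceding two steps, so $H=\spinmn$. The only real computation is the orientation cancellation in the middle step; once that is checked, the connectedness of the grid graph and the lemma make the rest automatic, so there is no substantive obstacle.
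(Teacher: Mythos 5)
Your proof is correct and rests on the same two pillars as the paper's: Lemma~\ref{lemma:transpositions} applied to the grid graph of adjacent positions, plus the observation that $\Soneone$ gives full control over orientations. The only stylistic difference is that you first cancel the orientation bits of a $1\times 2$ spin against two single-tile spins to land on a pure transposition $((p_1\ p_2),\bid)$, thereby exhibiting the subgroup $\{(\alpha,\bid):\alpha\in S_N\}$ inside $H$ before factoring $(\alpha,\bv)=(\alpha,\bid)(\sid,\bv)$; the paper instead applies the lemma directly to the (non-pure) spins in $\Sonetwo$ to produce some $(\alpha,\bv')$ with the right permutation part and then corrects the orientation vector afterward. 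Both routes are valid and essentially equivalent in content.
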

\begin{proof}
The set $\Sonetwo$ consists of transpositions that form a connected graph whose vertices are the positions on an $m\times n $ board with edges between adjacent positions.
If follows from Lemma \ref{lemma:transpositions} that, given any element $(\alpha,\bu)$ in $\spinmn$, there is a vector $\bv\in B_{mn}$ for which we can construct $(\alpha,\bv)$ as a product of elements in $\Sonetwo$.
By applying appropriate elements of $\Soneone$ to $(\alpha,\bv)$ we can obtain $(\alpha,\bu)$.
\end{proof}

The corollary implies that every starting board in the Spinpossible game has a solution.
We now give an upper bound on the length of any solution.

\begin{theorem}\label{thm:upper}
Every element of $\spinmn$ can be expressed as a product of at most $3mn - (m+n)$ spins.
\end{theorem}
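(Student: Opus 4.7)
The plan is to solve the board in row-major order, placing the tile destined for each position one at a time, and bounding the number of spins per tile by a careful case analysis. Index each position by its row/column pair $(r,c)$, so position $j=(r-1)n+c$. Suppose positions $1,\ldots,j-1$ already hold their correct tiles with positive orientation; I wish to bring the tile belonging at $(r,c)$ into place without disturbing the earlier positions. Call the set of positions $\ge j$ the \emph{unsolved region}; the target tile currently lies at some $(r',c')$ in this region, so $r'\ge r$, and if $r'=r$ then $c'\ge c$.

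The key claim is that this single tile can always be placed correctly in at most $3$ spins, and in at most $2$ spins whenever $c'\ge c$. If $c'\ge c$, applying the spin about the rectangle with corners $(r,c)$ and $(r',c')$ sends the target tile to $(r,c)$ (possibly with the wrong orientation). Since this rectangle lies entirely within the unsolved region, no previously-placed tile is touched; a single $1\times 1$ spin at $(r,c)$ then corrects orientation if needed, costing at most $2$ spins total. If instead $c'<c$, then necessarily $r'>r$, and I perform three spins: a $1\times(c-c'+1)$ spin in row $r'$ to slide the tile from $(r',c')$ to $(r',c)$; a $(r'-r+1)\times 1$ spin in column $c$ to lift it from $(r',c)$ to $(r,c)$; and finally a $1\times 1$ spin at $(r,c)$ to correct orientation. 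The first of these acts only in row $r'>r$, and the second only in column $c$ at rows in $[r,r']$, so neither touches the solved region.

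Summing over tiles: in each of the first $m-1$ rows, the leftmost tile has $c=1\le c'$ and costs at most $2$ spins, while each of the remaining $n-1$ positions costs at most $3$ spins, so the row contributes at most $2+3(n-1)=3n-1$ spins. In the last row ($r=m$) every candidate position $(r',c')$ satisfies $r'=r$, hence $c'\ge c$ automatically and each tile costs at most $2$ spins; moreover, once positions $1,\ldots,mn-1$ are solved, the tile at position $mn$ is forced by elimination to already be there, needing only a possible orientation flip ($\le 1$ spin). The last row therefore costs at most $2(n-1)+1=2n-1$ spins, and the grand total is
\[
(m-1)(3n-1)+(2n-1)=3mn-(m+n),
\]
as required.

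The main obstacle is simply the geometric bookkeeping to confirm that each rectangle used in the case analysis is contained in the unsolved region; this reduces to verifying, for the three rectangle shapes introduced above, that every cell has row $\ge r$ and that any cell in row $r$ has column $\ge c$, which is immediate from how the corners were chosen.
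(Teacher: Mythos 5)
Your proof is correct, and it takes a genuinely different route from the paper's. The paper separates the permutation from the orientation: it first writes the target element as $(\alpha,\bv)(\sid,\bu+\bv)$, bounds the orientation factor by $mn$ singleton spins, and then bounds the permutation factor by $2mn-(m+n)$ spins via an induction that fills the leftmost column with $2m-1$ spins and recurses on the board obtained by deleting that column (requiring a WLOG assumption $m\le n$ so the final inequality $2mn+m-3n-1\le 2mn-(m+n)$ closes). You instead solve the board greedily in row-major position order, placing one tile at a time and fixing its orientation immediately with a trailing $1\times 1$ spin, charging at most $3$ spins per generic position, at most $2$ for each first-column and last-row position, and at most $1$ for the final cell; the sum $(m-1)(3n-1)+(2n-1)=3mn-(m+n)$ then comes out exactly with no slack and no WLOG. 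The two constructions use the same elementary moves (one rectangle spin when the target lies down-and-right, two axis-aligned spins when it lies down-and-left), but your bookkeeping is a bit tighter and more uniform, while the paper's orientation/permutation split makes the inductive structure cleaner to state. One thing worth making explicit in your writeup, though it is easy to verify, is that the intermediate $1\times(c-c'+1)$ and $(r'-r+1)\times 1$ rectangles in your Case 2 really do avoid the solved region; you gesture at this at the end, and it does follow from $r'>r$ for the first and from all cells lying in column $c$ at rows in $[r,r']$ for the second.
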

\begin{proof}
Let $(\alpha,\bu)$ be an element of $\spinmn$.
For any $\bv\in V_N$ we may write $(\alpha,\bu)$ as $(\alpha,\bv)(\sid,\bu+\bv)$, where $\sid$ denotes the trivial permutation.
It is clear that we can express $(\sid,\bu+\bv)$ as the product of at most $mn$ elements in $\Soneone$.
Thus it suffices to show that we can construct an element of the form $(\alpha,\bv)$, for some $\bv \in V_N$, as a product of at most $2mn-(m+n)$ spins.
Since we may use any $\bv$ we like, we now ignore the orientation of tiles and focus on the permutation $\alpha$.
Rather than constructing $\alpha$, we shall construct $\alpha^{-1}$ (which is equivalent, since $\alpha$ is arbitrary).

We now proceed by induction on $N$ to show that we can construct $\alpha^{-1}$ using at most $2mn-(m+n)$ spins.
For $N=1$ we necessarily have $\alpha^{-1}=\sid$, which is the product of $0=2mn-(m+n)$ spins.
For $N>1$, assume without loss of generality that $m\le n$ (interchange the role of rows and colunmns in what follows if not).
We first use the spin about the rectangle $[1,\alpha(1)]$ to restore tile~1 to its correct position in the upper left corner.
Now let $i$ and $j$ be the vertical and horizontal distances, respectively, between positions $n+1$ and $\alpha(n+1)$, so that $\dist(n+1,\alpha(n+1))=i+j$.
To move tile $n+1$ to position $n+1$ (the second row of the leftmost column) we first apply an element of $\SS_{(i+1) \times 1}$ to move tile $n+1$ to the correct row, and then apply an element of $\SS_{1\times (j+1)}$ to move tile $n+1$ to the correct column (we can omit spins in $\Soneone$, which arise when $i$ or $j$ is zero).
Neither of these spins affects position~1.
In a similar fashion, we can successively move each tile $kn+1$ for $2\le k < m$ from position $\alpha(kn+1)$ to position $kn+1$ using at most two spins per tile, without disturbing any of the tiles in positions $jn+1$ for $j<k$.

The total number of spins used to correctly position all the tiles in the leftmost column is $2m-1$.
By the inductive hypothesis, we can correctly position the remaining tiles in the $(m-1)\times n$ board obtained by ignoring the leftmost column using at most $2(m-1)n-(m-1+n)$ spins.
The total number of spins used is
\[
2m-1 + 2(m-1)n-(m-1+n) = 2mn+m-3n-1,
\]
and since $m\le n$ this is less than $2mn-(m+n)$.
\end{proof}

The upper bound in Theorem \ref{thm:upper} can be improved for $mn>1$.
A more detailed analysis of the case $\spin_{3\times 3}$ shows that one can move every tile to its correct position using at most 9 spins\footnote{It is known that 7 spins always suffice, and are sometimes necessary.}, and then orient every tile correctly using at most 7 spins, yielding an upper bound of 16, versus the bound of 21 given by Theorem \ref{thm:upper}.
We also note that the leading constant 3 is not the best possible: for $m,n\ge 3$ the technique used to orient tiles in the $3\times 3$ case can be generalized to achieve $25/9$.

Let $k(m,n)$ denote the maximum length of a solution to a board in $\spinmn$.
Theorem \ref{thm:upper} gives an upper bound on $k(m,n)$.  We now prove a lower bound.

\begin{theorem}\label{thm:lower}
Assume $N=mn>1$.  Then
\[
k(m,n) \ge \frac{\ln (2^NN!)}{\ln \left(\binom{m+1}{2}\binom{n+1}{2}+1\right)}.
\]
This implies the bound
\[
k(m,n) \ge \frac{1}{2}mn - \frac{(1-\ln 2)}{2}\cdot\frac{mn}{\ln mn} + \frac{1}{4}.
\]

\end{theorem}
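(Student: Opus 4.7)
The proof is a standard information-theoretic (pigeonhole) argument followed by asymptotic estimation via Stirling. Set $k = k(m,n)$. Since every element of $\spinmn$ is a product of at most $k$ spins, by padding each such product with copies of the identity element $\spinid$ we may write every element of $\spinmn$ as a product of exactly $k$ elements of $\SS \cup \{\spinid\}$. The size of this generating alphabet is $|\SS|+1 = \binom{m+1}{2}\binom{n+1}{2}+1$ by Proposition \ref{prop:cardinality}(3), so
\[
2^N N! = |\spinmn| \le (|\SS|+1)^k.
\]
Taking logarithms immediately yields the first displayed inequality.

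To derive the asymptotic bound, I would estimate the numerator using Stirling's lower bound $N! > \sqrt{2\pi N}(N/e)^N$, which gives
\[
\ln(2^N N!) > N\ln N - N(1-\ln 2) + \tfrac{1}{2}\ln(2\pi N).
\]
For the denominator, I would check the elementary inequality $\binom{m+1}{2}\binom{n+1}{2} + 1 \le (mn)^2 = N^2$ for all $N \ge 2$. Rewritten, this is $mn\bigl[3mn - (m+n+1)\bigr] \ge 4$, and using the well-known inequality $m+n \le mn+1$ (equivalent to $(m-1)(n-1)\ge 0$) reduces it to $2mn(mn-1) \ge 4$, which holds for $mn \ge 2$. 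Thus the denominator is at most $2\ln N$, and combining with the numerator estimate gives
\[
k(m,n) \ge \frac{N}{2} - \frac{(1-\ln 2)N}{2\ln N} + \frac{\ln(2\pi N)}{4\ln N}.
\]
Since $\frac{\ln(2\pi N)}{4\ln N} = \frac{1}{4} + \frac{\ln(2\pi)}{4\ln N} > \frac{1}{4}$, the claimed inequality follows.

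There is no real obstacle: both the counting argument and Stirling's estimate are routine. The only subtle point is that the denominator inequality $|\SS|+1 \le N^2$ is sharp at $N=2$ and fails at $N=1$, so the hypothesis $N > 1$ in the statement is exactly what is required. Conceptually, the content of the argument lies in the counting step, where one measures the information encoded by a single spin: $\log_2(|\SS|+1)$ bits per step cannot reach all $\log_2(2^N N!)$ elements in fewer than the stated number of steps.
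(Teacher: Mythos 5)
Your proof is correct and matches the paper's approach: a pigeonhole count giving $(|\SS|+1)^k \ge |\spinmn| = 2^N N!$, followed by the bound $|\SS|+1 \le N^2$ and Stirling's estimate. Incidentally, you are slightly more careful than the paper here: the paper asserts the strict inequality $N^2 > |\SS|+1$ for all $N=mn>1$, but this fails with equality at $N=2$ (take $m=1$, $n=2$, giving $|\SS|+1 = 4 = N^2$), so the non-strict inequality you verify is exactly what is needed and still yields $N^{2k}\ge 2^N N!$.
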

\begin{proof}
Let $c = |\SS|+1$.  The number of distinct expressions of the form $s_1\cdots s_j$ with $s_1,\ldots,s_j\in\SS$ and $j\le k$ is at most $c^k$.
Not all of these expressions yield distinct elements of $\spinmn$, but in any case it is clear that they correspond to at most $c^k$ distinct elements of $\spinmn$.
The cardinality of $\spinmn$ is $2^NN!$, thus in order to express every element of $\spinmn$ as a product of at most $k$ spins we must have
\begin{equation}\label{eq:lower1}
c^k \ge 2^NN!
\end{equation}
From Proposition \ref{prop:cardinality} we have $c = \binom{m+1}{2}\binom{n+1}{2} + 1$.
Taking logarithms in (\ref{eq:lower1}) and dividing by $\ln c$ yields the first bound for $k(m,n)$.

For the second bound, we note that $N^2 > c$ for all $N=mn>1$, thus we can replace the LHS of (\ref{eq:lower1}) by $N^{2k}$.
By bounding the error term in Stirling's approximation one can show that
\[
\ln N! \ge N\ln N - N + \frac{1}{2}\ln N,
\]
for all $N\ge 1$. % see, e.g., \cite[p.~481]{GKP}.
Applying $N^{2k}> c^k$ and taking logarithms in (\ref{eq:lower1}) yields
\[
2k\ln N \ge  N\ln N - (1-\ln 2) N + \frac{1}{2}\ln N.
\]
Dividing by $2\ln N$ gives
\[
k \ge  \frac{1}{2}N - \frac{1-\ln 2}{2}\cdot\frac{N}{\ln N} + \frac{1}{4},
\]
which proves the second bound.
\end{proof}

For $m=n=3$, Theorem \ref{thm:lower} give the lower bound $k(3,3)\ge 6$, which is not far below the known value $k(3,3)=9$.
Asymptotically, we have the following corollary.
\begin{corollary}\label{cor:bounds}
The asymptotic growth of $k(m,n)$ is linear in $N=mn$.  More precisely, for every $\epsilon > 0$ there is an $N_0$ such that
\[
\left(\frac{1}{2}+\epsilon\right)N\medspace < \medspace k(m,n) \medspace \le \medspace 3N
\]
for all $N > N_0$.
\end{corollary}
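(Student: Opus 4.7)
The plan is to combine Theorems \ref{thm:upper} and \ref{thm:lower} and pass to the $N\to\infty$ regime. The upper side is immediate: Theorem \ref{thm:upper} gives $k(m,n) \le 3mn - (m+n) < 3N$ for every $m,n\ge 1$, so the right-hand inequality in the corollary holds uniformly, with no lower bound on $N$ needed on that side.

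For the lower bound $(\tfrac{1}{2}+\epsilon)N < k(m,n)$ I would apply the first (sharper) estimate in Theorem \ref{thm:lower},
\[
k(m,n) \;\ge\; \frac{\ln(2^N N!)}{\ln\!\bigl(\binom{m+1}{2}\binom{n+1}{2}+1\bigr)},
\]
rather than its Stirling-based corollary, since the latter already replaces $|\SS|$ by the coarser $N^2$ and therefore loses the positive contribution one needs. Stirling's formula gives $\ln(2^N N!) = N\ln N + (\ln 2 - 1)N + O(\ln N)$. For the denominator I would rewrite $\binom{m+1}{2}\binom{n+1}{2} = \tfrac14 N(m+1)(n+1)$, and use the elementary bound $(m+1)(n+1) \le 2N + 2$ (which follows from $(m-1)(n-1)\ge 0$) to conclude the \emph{sharp} uniform estimate $\ln\!\bigl(\binom{m+1}{2}\binom{n+1}{2}+1\bigr) \le 2\ln N + O(1)$, valid across every factorization $N = mn$ and strictly smaller than the $\ln(N^2+1)$ used to derive the weaker corollary bound.

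Dividing, the leading term of the ratio is the required $N/2$. The next step is a first-order expansion in $1/\ln N$: after multiplying out, the subleading coefficient is a combination of the $(\ln 2 - 1)$ term from the numerator and an $O(1)$ term from the denominator of the form $-\ln 2$ or smaller, and the plan is to verify that this combination is of the right sign and magnitude so that, together with the $O(\ln N)$ and $O(1)$ remainders, the overall bound exceeds $(\tfrac12+\epsilon)N$ once $N$ passes a threshold $N_0$ depending only on $\epsilon$.

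The main obstacle is uniformity in the shape $(m,n)$: for fixed $N$, the quantity $|\SS| = \binom{m+1}{2}\binom{n+1}{2}$ ranges between its minimum at the square shape $m=n$ (of order $N^2/4$) and its maximum at the degenerate shape $m=1$ (of order $N^2/2$), so a naive estimate would force $N_0$ to depend on the factorization as well as on $\epsilon$. The worst-case uniform bound $\ln(|\SS|+1) \le 2\ln N + O(1)$ above absorbs both extremes into a single $O(1)$ constant, which is exactly what allows the choice of $N_0$ to depend only on $\epsilon$. Once this uniformity is in place, the remainder of the argument is routine bookkeeping with Stirling's approximation and the geometric expansion of $1/(2\ln N + O(1))$.
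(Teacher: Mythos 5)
Your upper bound is fine, and your intermediate estimates are all correct: $(m+1)(n+1)\le 2N+2$ for every factorization $N=mn$, hence $\ln\bigl(\binom{m+1}{2}\binom{n+1}{2}+1\bigr)\le 2\ln N-\ln 2+O(1/N)$ uniformly in the shape, and Stirling applied to the first bound of Theorem \ref{thm:lower} gives
\[
k(m,n)\;\ge\;\frac{N\ln N-(1-\ln 2)N+O(\ln N)}{2\ln N-\ln 2+O(1/N)}\;=\;\frac{N}{2}\Bigl(1+\frac{\tfrac{3}{2}\ln 2-1+o(1)}{\ln N}\Bigr).
\]
Since $\tfrac{3}{2}\ln 2-1\approx 0.04>0$, this is indeed sharper than the paper's second bound (which stays below $N/2$), and it even shows $k(m,n)>N/2$ for all large $N$. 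But here is the gap: the gain over $N/2$ is only $\Theta(N/\ln N)$, while the target $(\tfrac12+\epsilon)N$ exceeds $N/2$ by the linear amount $\epsilon N$. For any fixed $\epsilon>0$, once $\ln N$ exceeds a constant multiple of $1/\epsilon$ your lower bound drops below $(\tfrac12+\epsilon)N$, so the final bookkeeping step cannot be carried out; the inequality runs the wrong way precisely in the large-$N$ regime the corollary is about. Nor can any refinement of the counting inequality $c^k\ge 2^NN!$ help, since $c>N^2/4$ for every factorization, so the resulting bound is always $\tfrac{N}{2}+O(N/\ln N)$.

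In fact the statement cannot be proved as printed, because it is false as printed: taking $\epsilon=3$ it would assert $3.5N<k(m,n)\le 3N$ for all large $N$, a contradiction. The ``$+\epsilon$'' is evidently a typo for ``$-\epsilon$''. With the corrected statement $(\tfrac12-\epsilon)N<k(m,n)\le 3N$, the paper's intended argument (no proof is given, since it is immediate) is simply to combine Theorem \ref{thm:upper}, which gives $k(m,n)\le 3mn-(m+n)\le 3N$, with the second bound of Theorem \ref{thm:lower}, which gives $k(m,n)\ge\tfrac12 N-\tfrac{1-\ln 2}{2}\cdot\tfrac{N}{\ln N}+\tfrac14>(\tfrac12-\epsilon)N$ as soon as $\tfrac{1-\ln 2}{2\ln N}<\epsilon$; no Stirling refinement or uniformity-in-shape analysis is needed. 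Your sharper estimate is a genuinely nice observation (the counting bound does clear $N/2$), but it does not and cannot reach $(\tfrac12+\epsilon)N$.
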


Recall that for a group $G$ generated by a set $S$, the Cayley graph $\cay(G,S)$ is the graph with vertex set $G$ and edge $(g,h)$ labelled by $s$ whenever $sg=h$, where $s\in S$ and $g,h\in G$.
A solution to a board $b\in\spinmn$ corresponds to a shortest path from $b$ to the identity in the graph $\cay(\spinmn,\SS)$.  The quantity $k(m,n)$ is the diameter of this graph.

\section{Restricted Spin Sets}\label{section:restrictions}

Spinpossible includes variations of the standard game that place restrictions on the types of spins that are allowed.
For example, the ``no singles/doubles" puzzle levels prohibit the use of spins in $\Soneone$ and $\Sonetwo$.
This raises the question of whether it is still possible to solve every board under such a restriction.
More generally, we may ask: which subsets of the full set of spins $\SS=\SS(m,n)$ generate $\spinmn$?

We begin by defining three subsets of $\SS$ that cannot generate $\spinmn$ when $mn>1$, using three different notions of parity.

\begin{enumerate}
\item The $\emph{even area}$ spins $\SS^a$ are the spins whose rectangles have even area.
$\SS^a$ is the union of the $\SS_{i\times j}$ for which $ij\equiv 0\bmod 2$.
\smallskip

\item The $\emph{even permutation}$ spins $\SS^p$ are the spins that contain an even number of transpositions.
$\SS^p$ is the union of the $\SS_{i\times j}$ for which $ij\equiv 0\text{ or }1\bmod 4$.
\smallskip

\item The $\emph{even distance}$ spins $\SS^d$ are the spins that transpose positions at even distances.
$\SS^d$ is the union of the $\SS_{i\times j}$ for which $i+j\equiv 0\bmod 2$.
\end{enumerate}

We now consider the corresponding subgroups of $\spinmn$.  In these definitions $\alpha$ is a permutation in $S_N$,
$\bv$ is a vector in $V_N$, and $\wt(\bv)$ denotes the Hamming weight of~$\bv$ (the number of 1s it contains).
The group $A_N$ is the alternating group in~$S_N$, and we define the permutation group $D_N\cong S_{\lceil N/2\rceil}\times S_{\lfloor N/2 \rfloor}$ as follows:
\[
D_N = \{ \alpha : \dist(i,\alpha(i)) \equiv 0\bmod 2 \text{ for } 1\le i\le N\}.
\]
Here $i$ and $\alpha(i)$ identify positions on an $m\times n$ board and $\dist(i,\alpha(i))$ is the $\ell_1$-distance.

The subgroups $\spinmn^*$, where $*$ is $a$, $p$, or $d$, are defined as follows:
\begin{enumerate}
\item $\spinmn^a = \{(\alpha,\bv):\wt(\bv)\equiv 0 \bmod 2\}\qquad\text{(index 2)}$.
\smallskip
\item $\spinmn^p = \{(\alpha,\bv): \alpha\in A_N\}\qquad\qquad\qquad\text{(index 2)}$.
\smallskip
\item $\spinmn^d = \{(\alpha,\bv): \alpha\in D_N\}\qquad\qquad\qquad\text{(index }\binom{N}{\lfloor N/2 \rfloor})$.
\smallskip
\end{enumerate}
It is not necessarily the case that $\SS^*$ generates $\spinmn^*$, but we always have $\SS^* = \SS \cap \spinmn^*$.
In particular, it is clear that $\langle\SS^*\rangle\subseteq\spinmn^*$.
The following propositions give some conditions under which equality holds.

\begin{proposition}\label{prop:spina}
Assume $m,n\ge 2$ and $mn>4$.
Then $\Sonetwo\cup\Stwotwo$ (and therefore~$\SS^a$) generates~$\spinmn^a$.
\end{proposition}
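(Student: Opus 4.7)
The plan is to analyze $H=\langle\Sonetwo\cup\Stwotwo\rangle$ through its image and kernel under the projection $\projmap\colon\spinmn\to S_N$. The inclusion $H\subseteq\spinmn^a$ is automatic: every generator in $\Sonetwo$ has flip weight $2$ and every generator in $\Stwotwo$ has flip weight $4$, and the product formula $(\alpha,\bu)(\beta,\bv)=(\alpha\beta,\bu^\beta+\bv)$ preserves the parity of the flip weight, since permutation preserves weight and $\wt(\bu+\bv)\equiv\wt(\bu)+\wt(\bv)\pmod 2$.

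For the reverse inclusion, I would first observe that $\projmap(\Sonetwo)$ is the set of adjacent transpositions on the $m\times n$ grid, which generates $S_N$ by Lemma~\ref{lemma:transpositions}; hence $\projmap(H)=S_N$. Let $K=H\cap\ker\projmap$, identified with a subgroup of $V_N$. A short calculation gives $(\alpha,\bu)(\sid,\bv)(\alpha,\bu)^{-1}=(\sid,\bv^{\alpha^{-1}})$, so conjugation by elements of $H$ realizes the full action of $S_N$ on $K$ by coordinate permutation. Thus $K$ is an $S_N$-invariant subgroup of the even-weight subspace $W\subseteq V_N$.

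Next I would classify such subgroups. Any $S_N$-invariant subgroup of $V_N$ is a union of Hamming-weight orbits, and for every even $k$ with $2\le k\le N-2$ the identity $(\be_1+\cdots+\be_k)+(\be_2+\cdots+\be_{k+1})=\be_1+\be_{k+1}$ shows that containing the weight-$k$ orbit forces the subgroup to contain a weight-$2$ vector, and hence (since weight-$2$ vectors span $W$) all of $W$. The only $S_N$-invariant subgroups of $W$ are therefore $\{\bid\}$, $\{\bid,\bone\}$ (the latter only when $N$ is even), and $W$ itself.

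The main obstacle, and essentially the only explicit calculation, is to exhibit an element of $K$ other than $\bid$ and $\bone$. Fix any $2\times 2$ subblock of the board, with positions labelled
\[
\begin{array}{cc} p_1 & p_2 \\ p_3 & p_4 \end{array},
\]
let $s$ be its $2\times 2$ spin, and let $t_{12},t_{34},t_{13},t_{24}\in\Sonetwo$ be the four $1\times 2$ spins along its sides. Multiplying these together step by step with the product formula establishes
\[
s\cdot t_{12}\cdot t_{34}\cdot t_{13}\cdot t_{24}=(\sid,\be_{p_1}+\be_{p_2}+\be_{p_3}+\be_{p_4}),
\]
and since $mn>4$ this weight-$4$ vector is not $\bone$. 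The classification then forces $K=W$, and combining $|H|=|K|\cdot|\projmap(H)|=2^{N-1}N!=|\spinmn^a|$ with $H\subseteq\spinmn^a$ yields $H=\spinmn^a$.
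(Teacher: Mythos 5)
Your proof is correct and takes a genuinely different route from the paper's. The paper also begins by establishing $\projmap(G)=S_N$, but its main effort is to write out an explicit seven-factor product of spins equal to $\bigl((1\ 2),\bid\bigr)$; combined with the isometry argument this shows $G$ contains the full copy $\{(\alpha,\bid):\alpha\in S_N\}$, and a coset argument (building one element of each even flip-weight and left-multiplying by $(\beta^{-1},\bid)$) then delivers all even-weight vectors in $\ker\projmap$. You instead treat $K=H\cap\ker\projmap$ structurally: since $K$ is the kernel of the restricted projection it is normal in $H$, and your conjugation computation shows that $\projmap(H)=S_N$ forces $K$ to be an $S_N$-invariant subgroup of the even-weight subspace $W$. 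The classification of such subgroups, plus a single explicit weight-$4$ element of $K$ (which I checked: both the permutation and the flip vector come out as claimed), pins down $K=W$, and the order count closes the argument. The tradeoff is that the paper's proof is more hands-on and self-contained, while yours replaces the longer explicit identity and the coset bookkeeping with a reusable module-theoretic fact about $V_N$ as an $S_N$-module. One small imprecision: you state the telescoping identity $(\be_1+\cdots+\be_k)+(\be_2+\cdots+\be_{k+1})=\be_1+\be_{k+1}$ only for $2\le k\le N-2$, but it is valid for $k\le N-1$; for odd $N$ the weight-$(N-1)$ orbit falls outside your stated range, though this does not affect your conclusion (two distinct weight-$(N-1)$ vectors already sum to a weight-$2$ vector, and your application only uses $k=4$, which is $\le N-2$ since $N\ge 6$ under the hypotheses).
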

\begin{proof}
Let $G=\langle \Sonetwo\cup\Stwotwo\rangle$, and let $\hat\projmap$ denote the restriction of $\projmap$ to $G$.
The fact that $G$ contains $\Sonetwo$ implies that $\projmap(G) = S_N$, by Lemma \ref{lemma:transpositions}.
It thus suffices to show that the kernel of $\hat\projmap$ has index 2 in $\ker \projmap\cong V_N$.

The following product of spins in $G$ transposes the tiles in positions 1 and 2:
\begin{equation}\label{eq:agen}
[2,3] [1,n+1] [1,n+2] [2,3] [1,n+2] [1,n+1] [2,3] = \bigl((1\ 2),\bid\bigr)
\end{equation}
We can transform the identity above by applying any square-preserving isometry of~$\Z^2$ (the group generated by unit translations and reflections about the lines $y=0$ and $y=x$).
Such a transformation may change the location and/or orientation of the rectangles identifying the spins that appear in the product, but it does not change their spin type (the set $\SS_{i\times j}$ to which they belong).
This allows us to transpose any pair of adjacent tiles on the $m\times n$ board using a product of spins in~$G$.
It follows from Lemma \ref{lemma:transpositions} that $G$ contains the subgroup $H=\{(\alpha,\bid):\alpha\in S_N\}$.

For each even integer $w$ from $0$ to $N$, we can construct some $g_w=(\beta,\bv)$ with $\wt(\bv) = w$, as a product of elements in $\Sonetwo$.
The coset $g_wH\subset G$ then contains elements of the form $(\beta,\bv)$ for every vector $\bv$ with $\wt(\bv) = w$.
Multiplying each $(\beta,\bv)$ on the left by $(\beta^{-1},\bid)$,
we see that $G$ contains elements $(\sid,\bv)$ for every even weight vector $\bv$.
Therefore $\ker \hat\projmap$ has index 2 in $\ker \projmap$.
\end{proof}

We note that $\SS^a$ does not generate $\spinmn^a$ when $m=n=2$, nor when exactly one of $m$ or $n$ is 1.
%We first show that the condition $ij\equiv 0,1 \bmod 4$ given in the definition of $\spinmn^p$ is equivalent to having an even number of transpositions.
%An element $s_R$ of $\Sij$ transposes tiles in pairs that cover all of $R$ except when $i$ and $j$ are both odd, in which case the tile at the center of $R$ is not moved.
%If $i$ and $j$ are both even, then every tile in $R$ is moved and there will be an even number of transpositions if and only if $ij\equiv 0\bmod 4$.
%If $i$ and $j$ are both odd, then the center tile of $R$ is not moved and there will be an even number of transpositions if and only if $ij\equiv 1\bmod 4$.

\begin{proposition}\label{prop:spind}
Assume $mn \ne 4$.  Then $\Soneone\cup\Stwotwo\cup\Sonethree$ (and therefore $\SS^d$) generates $\spinmn^d$.
\end{proposition}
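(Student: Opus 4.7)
My plan is to mirror the structure of Proposition \ref{prop:spina}. Let $G=\langle\Soneone\cup\Stwotwo\cup\Sonethree\rangle$ and let $\hat\projmap$ be the restriction of $\projmap$ to $G$. Each of the three generator types moves positions only to positions at even $\ell_1$-distance (immediate for $\Soneone$, distance $2$ for endpoints of a $1\times 3$ or $3\times 1$ spin, distance $2$ for each diagonal pair of a $2\times 2$ spin), so $G\subseteq\spinmn^d$. Because $\Soneone\subseteq G$ realizes every single-tile flip, the entire kernel $\ker\projmap\cong V_N$ lies in $G$; hence $G=\projmap^{-1}(\hat\projmap(G))$, and it remains to show $\hat\projmap(G)=D_N$.

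By multiplying each $1\times 3$ or $3\times 1$ spin by the three single-tile flips at its positions, I obtain a pure transposition $(p,q)$ for every horizontal or vertical pair with $\dist(p,q)=2$. Similarly, multiplying each $2\times 2$ spin by the four single flips at its positions yields a pure double transposition $(p_1,p_4)(p_2,p_3)$ along the two diagonals of the square. Two-color the positions by the parity of $i+j$ in board coordinates, partitioning them as $V=A\sqcup B$, so that $D_N=S_A\times S_B$. Each pure distance-$2$ transposition lies wholly in $S_A$ or $S_B$, while each pure $2\times 2$ double transposition factors as $\sigma_A\sigma_B$ with $\sigma_A\in S_A$ and $\sigma_B\in S_B$.

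The crucial identity is that if $\sigma=(a_1,a_2)(b_1,b_2)$ is a pure double transposition and $\tau=(a_1,a_3)$ is a pure distance-$2$ transposition sharing the letter $a_1$ and disjoint from $\{b_1,b_2\}$, then
\[
\sigma\cdot\tau\sigma\tau \;=\; (a_1,a_3,a_2),
\]
a pure $3$-cycle in $S_A$; the $S_B$-parts cancel since $\tau$ fixes $b_1$ and $b_2$. Combining this with the pure distance-$2$ transpositions (and the analogous construction for $B$), one accumulates enough pure permutations in each color class to apply Lemma \ref{lemma:transpositions} to $A$ and to $B$ separately.

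The remaining work is a short case analysis on board shape to verify that the pure transpositions thus obtained connect each color class. For $m,n\ge 3$ the horizontal and vertical distance-$2$ transpositions already connect each class by themselves. For $m=1$ (or $n=1$) there are no $2\times 2$ spins, but the single line of distance-$2$ transpositions from $\Sonethree$ connects each color class on its own. The delicate case is $m=2$ with $n\ge 3$ (and symmetrically), where the two rows of a single color class must be bridged via the $2\times 2$-derived $3$-cycles above; this is also where the hypothesis $mn\ne 4$ becomes essential, since on the $2\times 2$ board no $1\times 3$ spin is available and the lone $2\times 2$ spin produces only a single pure double transposition, generating a permutation subgroup of order $2$ rather than the order-$4$ group $D_4$. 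I expect the narrow-board ($m=2$) bookkeeping to be the main obstacle, though each subcase is routine.
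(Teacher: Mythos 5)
Your plan has the right skeleton (the kernel $V_N$ is obviously available from $\Soneone$, so the problem reduces to showing $\projmap(G)=D_N$), and the moves you describe for producing pure permutations—cancelling the orientation bits of a $1\times 3$ or $2\times 2$ spin with singles, and the identity $\sigma\tau\sigma\tau=(a_1\ a_3\ a_2)$—are all correct. But the case analysis at the end contains a genuine error that sinks the argument.

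The claim that ``for $m,n\ge 3$ the horizontal and vertical distance-$2$ transpositions already connect each class by themselves'' is false. Moving a position by $\pm 2$ horizontally or vertically preserves the parity of its row \emph{and} the parity of its column separately, so the graph you build on a color class is never connected once $m,n\ge 2$: each color class splits into two pieces according to whether $(r\bmod 2,c\bmod 2)$ equals $(0,0)/(1,1)$ or $(0,1)/(1,0)$. Already on the $3\times 3$ board the center position $5$ has no horizontal or vertical neighbor at distance $2$ inside the board and is isolated; on $4\times 4$ the class $\{1,3,6,8,9,11,14,16\}$ breaks into $\{1,3,9,11\}$ and $\{6,8,14,16\}$. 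So the diagonal moves supplied by $\Stwotwo$ are essential in \emph{every} case with $m,n\ge 2$, not only the narrow $m=2$ boards you flagged as delicate. Your write-up defers the only bridging construction (the $2\times 2$-derived $3$-cycles) to the $m=2$ case, leaving the generic case unproved. A secondary issue: once you do invoke that construction, you obtain $3$-cycles rather than transpositions, so Lemma~\ref{lemma:transpositions} does not apply directly; you would need an extra step to recover transpositions spanning the color class (this is fixable, but it is additional work you do not carry out).

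The paper sidesteps all of this by exhibiting a single spin identity, $[2,n+3]\,[1,3]\,[2,n+3]\,[n+3,n+3]=\bigl((1\ n+2),\bid\bigr)$, that directly produces a pure \emph{diagonal} transposition; transported by square-preserving isometries this yields every diagonally adjacent transposition, and those plainly connect each color class, so Lemma~\ref{lemma:transpositions} applies without any parity subtleties or $3$-cycle gymnastics. I would recommend either finding an analogous product of your generators that yields a pure diagonal transposition, or reworking the connectivity argument so that the diagonal bridges from $\Stwotwo$ are used uniformly for all $m,n\ge 2$ and the generating set within each color class is shown to contain enough transpositions, not just $3$-cycles.
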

\begin{proof}
Let $G=\langle \Soneone\cup\Stwotwo\cup\Sonethree\rangle$.  Then $G$ contains $\ker \projmap= \langle\Soneone\rangle\cong V_N$.
It remains to show that $\projmap(G)=D_N$.

Assume for the moment that $m\ge 2$ and $n\ge 3$.
The following product of spins in $G$ transpose the tiles in positions 1 and n+2:
\begin{equation}\label{eq:dgen}
[2,n+3] [1,3] [2,n+3] [n+3,n+3] = \bigl((1\ n+2),\bid\bigr)
\end{equation}
As in the proof of Proposition \ref{prop:spina}, we may transform this identity by applying any square-preserving isometry of $\Z^2$.
Thus we can transpose any pair of tiles that share exactly one common vertex (i.e., that are ``diagonally adjacent"),
and we can also handle the case $m\ge 3$ and $n\ge 2$.
It then follows from Lemma \ref{lemma:transpositions} that these transpositions generate $D_N$.

We now consider the case where $m$ or $n$ is 1.
If $mn\le 2$ the proposition clearly holds (we only need spins in $\Soneone$), so assume without loss of generality that $m=1$ and $n\ge 3$.
We now replace (\ref{eq:dgen}) with
\[
[1,1] [1,3] [2,2] [1,1] = \bigl((1\ 3),\bid\bigr),
\]
and apply the same argument.
\end{proof}
\noindent
It is easy to check that when $mn=4$ the set $\SS^d$ does not generate $\spinmn^d$.

We leave open the question of when $\SS^p$ generates $\spinmn^p$, but for $\spin_{3\times 3}$ we note that $\SS^p \subset \SS^a$ (see below), thus $\SS^p$ does not generate $\spinmn$ in this case.

For reference, we list the spin types $\SS_{i\times j}$ contained in $\SS^a$, $\SS^p$, and $\SS^d$ for all $i,j\le 3 \le m,n$:

\begin{enumerate}
\item $\Sonetwo, \Stwotwo, \Stwothree \subset \SS^a$.
\smallskip
\item $\Stwotwo, \Sthreethree \subset \SS^p$.
\smallskip
\item $\Soneone, \Sonethree, \Stwotwo, \Sthreethree \subset \SS^d$.
\end{enumerate}
To simplify our analysis of the subsets of $\SS$ that generate $\spinmn$, we introduce an equivalence relation on spin types.
\begin{definition}
Two spin types $\SS_{i\times j}$ and $\SS_{i'\times j'}$ are equivalent, denoted $\SS_{i\times j} \sim \SS_{i'\times j'}$, whenever $\langle \SS_{i\times j}\rangle = \langle \SS_{i'\times j'}\rangle$.
\end{definition}

\begin{proposition}\label{prop:equivalence}
Let $m,n\ge 3$.  For $1\le i, i',j,j'\le 3$ there is exactly one non-trivial equivalence of spin types $\Sij \sim \SS_{i'\times j'}$,
namely, $\Sonetwo\sim \Stwothree$
\end{proposition}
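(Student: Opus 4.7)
My plan is to rule out most equivalences via two parity homomorphisms and then establish $\Sonetwo\sim\Stwothree$ by mutual inclusion. Define $\psi,\phi:\spinmn\to\Z/2$ by $\psi(\alpha,\bv)=\wt(\bv)\bmod 2$ and $\phi(\alpha,\bv)=\operatorname{sgn}(\alpha)$; both are group homomorphisms, the first because $\wt(\bu^\beta+\bv)\equiv\wt(\bu)+\wt(\bv)\pmod 2$. Every generator of a given spin type shares a common $(\psi,\phi)$-signature determined by the rectangle's shape, and a quick calculation gives $\Soneone,\Sthreethree\colon(1,0)$; $\Sonetwo,\Stwothree\colon(0,1)$; $\Sonethree\colon(1,1)$; $\Stwotwo\colon(0,0)$. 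Two spin types with distinct signatures generate subgroups lying in different subgroups cut out by $\psi$ and $\phi$, hence cannot coincide; this restricts candidate equivalences to $\{\Soneone,\Sthreethree\}$ and $\{\Sonetwo,\Stwothree\}$. The first pair is distinguished by the projection $\projmap$: $\langle\Soneone\rangle=V_N$ has trivial image, whereas $\Sthreethree$ generators have nontrivial permutation part.

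For $\Sonetwo\sim\Stwothree$ I proceed by mutual inclusion. The direction $\Stwothree\subseteq\langle\Sonetwo\rangle$ uses the factorization $(1\ 6)(2\ 5)(3\ 4)=(1\ 4)(2\ 5)(3\ 6)\cdot(1\ 3)(4\ 6)$ of the 2$\times$3 rotation as a product of commuting disjoint transpositions. One has $[1,4][2,5][3,6]=((1\ 4)(2\ 5)(3\ 6),111111000)$ as a product of pairwise disjoint 2$\times$1 spins, while $[1,2][2,3][1,2]=((1\ 3),\bid)$ by a direct wreath-product calculation analogous to the identities appearing in the proofs of Propositions~\ref{prop:spina} and~\ref{prop:spind}, and similarly $[4,5][5,6][4,5]=((4\ 6),\bid)$. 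Combining these yields
\[
[1,6]=[1,4][2,5][3,6]\cdot[1,2][2,3][1,2]\cdot[4,5][5,6][4,5],
\]
expressing the top-left 2$\times$3 spin as a product of nine 1$\times$2 spins; the identity at other locations and for 3$\times$2 spins via the row--column swap is obtained by transport under the square-preserving isometries of $\Z^2$ used in the proof of Proposition~\ref{prop:spina}.

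For the reverse inclusion $\Sonetwo\subseteq\langle\Stwothree\rangle$ the plan is to exhibit an analogous identity expressing a 1$\times$2 spin like $[1,2]$ as a product of 2$\times$3 and 3$\times$2 spins. Taking two $\Stwothree$ spins whose rectangles share a $2\times 2$ block---say a 2$\times$3 on rows 1--2 and a 3$\times$2 on columns 1--2---and forming a sequence of products and conjugates designed to cancel all permutation and orientation data outside positions 1 and 2 should give the required identity. This direction is the principal obstacle: unlike the first direction there is no short factorization of $(1\ 2)$ as commuting transpositions available within the $\Stwothree$-permutations, so the argument reduces to a lengthy but mechanical verification of a single identity in the wreath product, from which transport by board isometries covers every 1$\times$2 spin. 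Together the two inclusions give $\langle\Sonetwo\rangle=\langle\Stwothree\rangle$, completing the proposition.
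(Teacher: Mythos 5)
Your parity-invariant argument for ruling out spurious equivalences is sound and in fact cleaner than the paper's: you use the two genuine homomorphisms $\psi(\alpha,\bv)=\wt(\bv)\bmod 2$ and $\phi(\alpha,\bv)=\operatorname{sgn}(\alpha)$ and observe that a spin type with signature $(a,b)$ generates a subgroup whose image in $(\Z/2\Z)^2$ is $\langle(a,b)\rangle$, so distinct signatures force distinct subgroups. This yields the same two candidate pairs $\{\Soneone,\Sthreethree\}$ and $\{\Sonetwo,\Stwothree\}$ that the paper obtains from its three $\SS^*$-membership criteria, and your projection argument for killing the first pair is exactly the paper's. Your factorization $[1,6]=[1,4][2,5][3,6]\cdot[1,2][2,3][1,2]\cdot[4,5][5,6][4,5]$ is correct (I checked the wreath-product arithmetic) and is a more structured nine-spin identity than the paper's, cleanly splitting the permutation part into three disjoint $2\times 1$ transpositions followed by two conjugations to produce $(1\,3)$ and $(4\,6)$.

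The reverse inclusion $\langle\Sonetwo\rangle\subseteq\langle\Stwothree\rangle$ is a genuine gap, and you flag it yourself as ``the principal obstacle.'' Stating that there ``should'' be an identity obtained by ``a sequence of products and conjugates designed to cancel all permutation and orientation data outside positions 1 and 2'' is not a proof; the identity must actually be exhibited. The paper supplies two explicit length-15 words in the four $\Stwothree$-generators $[1,6],[4,9],[1,8],[2,9]$, one producing $[1,2]$ and one producing $[4,5]$. The need for \emph{two} base cases is a second issue your sketch misses: on the $3\times 3$ board the square-preserving isometries that carry the four $\Stwothree$-rectangles to themselves are just the dihedral symmetries of the board, and under those symmetries the orbit of $[1,2]$ consists only of the eight boundary $1\times 2$ rectangles; the four central ones such as $[4,5]$ and $[2,5]$ form a separate orbit and require their own identity. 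So even granting the existence of an identity for $[1,2]$, transporting it ``covers every $1\times 2$ spin'' only for $m,n\ge 4$, not for the $m=n=3$ case that the proposition explicitly includes.
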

\begin{proof}
If a particular spin type is contained in $\SS^*$ (where $*$ is $a$, $p$, or $d$), then so is every equivalent spin type.
Examining the list of spin types for $\SS^*$, we can use this criterion to rule out all but two possible equivalences among the 6 spin types $\SS_{i\times j}$ with $1\le i,j\le 3$.
The first is the pair $\Soneone$ and $\Sthreethree$, but these cannot be equivalent because $\langle\Soneone\rangle$ lies in $\ker\pi_1\cong V_N$ but $\Sthreethree$ does not.
The second is the pair $\Sonetwo$ and $\Stwothree$, which we now show are equivalent.

For simplicity we shall write spins in terms of rectangles with coordinates on a $3\times 3$ board, but these can be generalized to an $m\times n$ board by replacing positions 4, 5, 6, 7, 8 and 9
with positions $n+1$, $n+2$, $n+3$, $2n+1$, $2n+2$, and $2n+3$, respectively.
We can write the spin $[1,6]$ as a product of spins in $\Sonetwo$ as follows:
\[
[1,6] = [2,5] [2,3] [4,5] [5,6] [1,2] [4,5] [2,3] [3,6] [1,4].
\]
As in the proofs of Propositions \ref{prop:spina} and \ref{prop:spind}, we can transform this identity via a square-preserving isometry of $\Z^2$ to express any spin in $\Stwothree$ as a product of spins in $\Sonetwo$.  Thus $\langle \Stwothree\rangle \subset \langle \Sonetwo\rangle$.
For the other inclusion, we may write the spins $[1,2]$ and $[4,5]$ as products of spins in $\Stwothree$ as follows:
\begin{align*}
[1,2] &= [1,6] [4,9] [1,8] [4,9] [1,8] [1,6] [1,8] [4,9] [1,8] [1,6] [1,8] [4,9] [1,8] [1,6] [1,8],\\
[4,5] &= [1,6] [2,9] [1,6] [2,9] [4,9] [2,9] [1,6] [2,9] [4,9] [2,9] [1,6] [2,9] [4,9] [2,9] [4,9]
\end{align*}
By transforming one of these two identities with a suitable isometry we can express any spin in $\Sonetwo$ as a product of spins in $\Stwothree$.
Thus $\langle \Sonetwo\rangle \subset \langle \Stwothree\rangle$.
\end{proof}

We are now ready to prove our main theorem, which completely determines the combinations of spin types that generate $\spin_{3\times 3}$.

\begin{theorem}
Assume that $m,n\ge 3$.
Let $\TT$ be a union of spin types $\Sij$,  where $1\le i,j\le 3$.
For $\TT$ to generate $\spinmn$, it is sufficient for $\TT$ to contain one of the following six sets:
$$ \Sonetwo\cup\Soneone,\quad  \Sonetwo\cup\Sonethree \qquad \Sonetwo\cup\Stwotwo\cup\Sthreethree,$$
$$ \Stwothree\cup\Soneone,\quad  \Stwothree\cup\Sonethree \qquad \Stwothree\cup\Stwotwo\cup\Sthreethree.$$
When $m=n=3$, this condition is also necessary.
\end{theorem}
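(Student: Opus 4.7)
The plan is to treat sufficiency and necessity separately, first using Proposition~\ref{prop:equivalence} ($\Sonetwo\sim\Stwothree$) to collapse the six listed sets into three essentially distinct ones: $\Sonetwo\cup\Soneone$, $\Sonetwo\cup\Sonethree$, and $\Sonetwo\cup\Stwotwo\cup\Sthreethree$. The first is Corollary~\ref{cor:generates}. The third follows from Proposition~\ref{prop:spina}, which gives $\langle\Sonetwo\cup\Stwotwo\rangle=\spinmn^a$; since any spin in $\Sthreethree$ has odd Hamming weight, it lies outside the index-$2$ subgroup $\spinmn^a$, and adjoining one such spin therefore fills out all of $\spinmn$.

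For the second class, set $G=\langle\Sonetwo\cup\Sonethree\rangle$. Lemma~\ref{lemma:transpositions} applied to $\projmap(\Sonetwo)$ gives $\projmap(G)=S_N$, so $K:=G\cap V_N$ is $S_N$-invariant. A short direct calculation produces a weight-three element of $K$: one verifies $[1,2][2,3][1,2]=((1\ 3),\bid)\in\langle\Sonetwo\rangle$, and multiplying by $[1,3]\in\Sonethree$ yields $[1,3]\cdot((1\ 3),\bid)=(\sid,\be_1+\be_2+\be_3)\in K$. The only $S_N$-invariant subgroups of $V_N$ are $\{\bid\}$, $\langle\bone\rangle$, $\ker\wt$, and $V_N$, and only the last contains a weight-three vector; so $K=V_N$, hence $\Soneone\subseteq G$, and Corollary~\ref{cor:generates} finishes the job.

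For necessity on the $3\times 3$ board I enumerate the six spin types $\Soneone,\Sonetwo,\Sonethree,\Stwotwo,\Stwothree,\Sthreethree$ and the three subgroup obstructions $\SS^a,\SS^p,\SS^d$. Any generating $\TT$ must escape each, which forces $\TT$ to contain $\Sonetwo$ or $\Stwothree$ (to escape $\SS^d$) and one of $\Soneone,\Sonethree,\Sthreethree$ (to escape $\SS^a$); escape from $\SS^p$ is automatic once $\Sonetwo\in\TT$. Using $\Sonetwo\sim\Stwothree$, I assume $\Sonetwo\in\TT$. The first two choices for the second ingredient yield the target sets $\Sonetwo\cup\Soneone$ and $\Sonetwo\cup\Sonethree$; the only delicate case is $\Sthreethree\in\TT$ with neither $\Soneone$ nor $\Sonethree$ in $\TT$, where the three named obstructions do not rule out $\TT=\Sonetwo\cup\Sthreethree$, yet the theorem demands $\Stwotwo\in\TT$ as well.

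The hard part is therefore the lemma that $\langle\Sonetwo\cup\Sthreethree\rangle$ is a proper subgroup of $\spin_{3\times 3}$. Since the abelianization of $\spin_{3\times 3}$ is only $(\Z/2\Z)^2$, no abelian invariant detects the obstruction; I introduce a non-abelian one. Let $c\colon\{1,\ldots,9\}\to\{0,1\}$ denote the checkerboard coloring, and define $D(p,\alpha)=1$ if $c(p)\neq c(\alpha^{-1}(p))$ and $0$ otherwise. The cocycle identity $D(p,\alpha\beta)\equiv D(\beta^{-1}(p),\alpha)+D(p,\beta)\pmod 2$ makes
\[
H:=\{(\alpha,\bv)\in\spin_{3\times 3}: v_p=D(p,\alpha)\text{ for every }p\}
\]
a subgroup of order $9!$. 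A direct check gives $\Sonetwo\subseteq H$, and since $\projmap(\Sonetwo)$ generates $S_9$ one has $\langle\Sonetwo\rangle=H$. The element $[1,9]\in\Sthreethree$ has $\bv=\bone$, but its permutation $\sigma=(1\ 9)(2\ 8)(3\ 7)(4\ 6)$ is monochromatic, so $D(\cdot,\sigma)=\bid\neq\bone$ and $[1,9]\notin H$; because $\sigma\in D_9$, a short conjugation calculation shows $[1,9]$ normalizes $H$. Thus $\langle\Sonetwo\cup\Sthreethree\rangle=H\cup H\cdot[1,9]$ has order $2\cdot 9!$, which is $2^8=256$ times smaller than $|\spin_{3\times 3}|=2^9\cdot 9!$, so the inclusion is proper. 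This forces $\Stwotwo\in\TT$ in the final case and completes the necessity.
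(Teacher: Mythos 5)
Your proof is correct, and it reaches the theorem by a genuinely different route in three places. For sufficiency of $\Sonetwo\cup\Sonethree$, where the paper exhibits an explicit product expressing $[1,1]$ in terms of spins in $\Sonetwo\cup\Sonethree$ (together with a conjugation trick to move the flip around), you instead observe that $K=G\cap V_N$ is an $S_N$-submodule of $V_N$, produce the odd-weight vector $\be_1+\be_2+\be_3\in K$, and invoke the classification of $S_N$-submodules of the natural permutation module to conclude $K=V_N$; that is cleaner and avoids any hand-computed word. For $\Sonetwo\cup\Stwotwo\cup\Sthreethree$ the paper again gives an explicit identity, whereas you notice that $\langle\Sonetwo\cup\Stwotwo\rangle=\spinmn^a$ by Proposition~\ref{prop:spina} and that any $\Sthreethree$ spin has odd Hamming weight and hence escapes the index-two subgroup $\spinmn^a$, which immediately forces the generated group to be everything. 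Finally, the crucial step --- showing $\langle\Sonetwo\cup\Sthreethree\rangle$ is proper in $\spin_{3\times 3}$ --- is handled in the paper by the shifting Lemma~\ref{lemma:unique} (push the unique $\Sthreethree$ spin to the right, giving $|\langle\Sonetwo\cup\Sthreethree\rangle|\le 2\,|\langle\Sonetwo\rangle|$) together with Lemma~\ref{lemma:onetwo} (proved via parity of tile paths), while you construct the subgroup $H=\langle\Sonetwo\rangle$ directly as the graph of the checkerboard cocycle $D(p,\alpha)$ and then verify that $[1,9]$, whose permutation preserves the coloring, normalizes $H$; this is effectively a self-contained alternative proof of Lemma~\ref{lemma:onetwo} plus a normalization argument replacing the shifting lemma. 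Both routes give the same index bound $2^8$. The paper's shifting lemma has the advantage of being reused in the section on unique solutions and of proving the cardinality claim without establishing a normalizer; your cocycle construction is more explicit and identifies $\langle\Sonetwo\rangle$ as a concrete complement to $V_N$. Your structural case analysis for necessity (escape $\SS^d$ forces $\Sonetwo$ or $\Stwothree$, escape $\SS^a$ forces one of $\Soneone,\Sonethree,\Sthreethree$) is equivalent to the paper's exhaustive enumeration of the $64$ subsets but is explanatory rather than computational, which is a genuine improvement in exposition.
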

\begin{proof}
We first prove sufficiency.
By Proposition \ref{prop:equivalence}, $\Sonetwo\sim\Stwothree$, so it is enough to prove that each of the first three sets listed in the theorem generates $\spinmn$.
As above, we specify spins using coordinates on a $3\times 3$ board, but these can coordinates can be transferred to an $m\times n$ board as noted in the proof of Proposition \ref{prop:equivalence}.

By Corollary \ref{cor:generates}, the set $\Soneone\cup\Sonetwo$ generates $\spinmn$.
For $\Sonetwo\cup\Sonethree$, it is enough to show that $\Soneone\subset\langle\Sonetwo\cup\Sonethree\rangle$.
We note that each element of $\Soneone$ has the form $s_i=(\sid,\be_i)$, where $\be_i$ is the weight 1 vector in $V_N$ with the $i$th bit set.
If $(\alpha,\bu)$ is any element of $\spinmn$ with $\alpha(1)=i$, then we have
\begin{align*}
(\alpha,\bu)^{-1}(\sid,\be_1)(\alpha,\bu)&= (\alpha^{-1},\bu^{\alpha^{-1}})(\sid,\be_1)(\alpha,\bu)\\
                                         &= (\alpha^{-1},\bu^{\alpha^{-1}}+\be_1)(\alpha,\bu)\\
                                         &= (\sid,\be_1^\alpha) = (\sid,\be_i).
\end{align*}
Since $\projmap(\langle\Sonetwo\rangle)=S_N$, by Lemma \ref{lemma:transpositions}, we can generate a suitable $(\alpha,\bu)$ for each $i$ from 1 to $N$.
Thus it is enough to show how to express the spin $[1,1]$ as a product of spins in $\Sonetwo\cup\Sonethree$:
\[
[1,1] = [1,2] [1,4] [1,3] [1,4] [1,2] [4,6] [3,6] [4,6].
\]
The same arguments apply to the third set $\Sonetwo\cup\Stwotwo\cup\Sthreethree$, thus it suffices to note that:
\[
[1,1] = [2,3] [1,5] [1,2] [3,6] [4,7] [1,5] [2,3] [5,8] [1,9] [5,9] [1,5] [5,8] [8,9].
\]

We now prove the necessity of the condition in the theorem, under the assumption $m=n=3$.
The set $\TT$ is the union of some subset of the six spin types
\[
\UU = \bigl\{ \Soneone, \Sonetwo, \Sonethree, \Stwotwo, \Stwothree, \Sthreethree \bigr\}.
\]
Of the 64 subsets of $\UU$, one finds that 22 of them have unions that are contained in $\SS^a$ or $\SS^d$, thus $\TT$ cannot
be the union of any of these 22 subsets.
Conversely, one finds that 39 of the remaining 42 subsets of $\UU$ have unions containing one of the 6 sets listed in the proposition.
The 3 remaining subsets of $\UU$ all have unions contained in $\Sonetwo\cup\Stwothree\cup\Sthreethree$, which we now argue does not generate $\spinmn$.

Since $\Sonetwo\sim\Stwothree$, it is enough to show that $\Sonetwo\cup\Sthreethree$ does not generate $\spinmn$.
By Lemma \ref{lemma:unique} below, any product of elements in $\Sonetwo\cup\Sthreethree$ is equivalent to a product in which the unique element of $\Sthreethree$ appears only once, in the rightmost position.
It follows that the cardinality of $\langle\Sonetwo\cup\Sthreethree\rangle$ is at most (in fact, exactly) twice that of $\langle\Sonetwo\rangle$.
But by Lemma \ref{lemma:onetwo} below, the subgroup $\langle\Sonetwo\rangle$ has trivial interesection with $\ker\projmap$ and thus has index $2^9=512$ in $\spin_{3\times3}$.
So $\langle\Sonetwo\cup\Sthreethree\rangle$ is a proper subgroup of $\spin_{3\times 3}$.
\end{proof}

\begin{lemma}\label{lemma:onetwo}
The restriction of the projection map $\projmap\colon\spinmn\to S_N$ to the group $G=\langle \Sonetwo\rangle$ is an isomorphism from $G$ to $S_N$.
\end{lemma}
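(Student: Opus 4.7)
The plan is to establish two properties of $\projmap|_G$: surjectivity onto $S_N$ and injectivity. Surjectivity is immediate from Lemma \ref{lemma:transpositions}, since the elements of $\Sonetwo$ project under $\projmap$ to the transpositions of adjacent positions on the $m\times n$ grid, and these form a connected graph on the positions; hence $\projmap(G)=S_N$.

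For injectivity, I would introduce the checkerboard coloring $c\colon\{1,\ldots,N\}\to\{0,1\}$ defined by $c(p)\equiv\dist(1,p)\pmod 2$, so that adjacent positions always receive opposite colors. The key claim is that every $(\alpha,\bv)\in G$ satisfies the parity invariant
\[
v_p \;\equiv\; c(p) + c(\alpha^{-1}(p)) \pmod 2 \qquad \text{for every position } p.
\]
Granted this, any $(\alpha,\bv)\in\ker(\projmap|_G)$ has $\alpha=\sid$ and hence $v_p\equiv 0$ for every $p$; thus $\ker(\projmap|_G)=\{\spinid\}$, and $\projmap|_G$ is injective.

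I would prove the invariant by induction on the length $k$ of a product expression $s_1s_2\cdots s_k$ of elements of $\Sonetwo$ (recalling from Proposition \ref{prop:properties}(1) that each spin is its own inverse, so $G$ is exactly the set of such products). The case $k=0$ is the identity, which trivially satisfies the invariant, and for $k=1$ a generator $s=\bigl((i\ j),\be_i+\be_j\bigr)$ with $i,j$ adjacent satisfies the invariant by direct inspection, since $c(i)+c(j)\equiv 1\pmod 2$. For the inductive step, if $(\alpha_1,\bv_1)\in G$ satisfies the invariant and $s=\bigl((i\ j),\be_i+\be_j\bigr)\in\Sonetwo$, the wreath-product multiplication gives
\[
(\alpha_1,\bv_1)\cdot s \;=\; \bigl(\alpha_1(i\ j),\ \bv_1^{(i\ j)}+\be_i+\be_j\bigr),
\]
and a short case analysis on whether $p\in\{i,j\}$ shows that this product again satisfies the invariant, the crucial ingredient again being $c(i)+c(j)\equiv 1\pmod 2$.

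The only real obstacle is identifying the correct invariant; once the checkerboard coloring is in hand, the rest is bookkeeping. A conceptually cleaner reformulation would be to verify directly that $\phi(\alpha)=(\alpha,\bv_\alpha)$ with $(\bv_\alpha)_p\equiv c(p)+c(\alpha^{-1}(p))\pmod 2$ defines a homomorphism $\phi\colon S_N\to\spinmn$ (a one-line check against the wreath-product multiplication); since $\phi$ sends each adjacent transposition to the corresponding spin in $\Sonetwo$ and $S_N$ is generated by these transpositions, one gets $\phi(S_N)=G$, and $\projmap\circ\phi=\mathrm{id}_{S_N}$ then identifies $\projmap|_G$ as the inverse of $\phi$.
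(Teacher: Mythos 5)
Your proof is correct. Both you and the paper establish surjectivity immediately from Lemma~\ref{lemma:transpositions} and then argue injectivity of $\projmap|_G$; the shared key insight is the bipartite (checkerboard) structure of the grid graph. The paper's injectivity argument is informal and trajectory-based: it fixes a tile $t$, observes that under a word in $\Sonetwo$ whose underlying permutation is trivial the tile traces a closed walk in the grid graph (each nontrivial step moves $t$ to an adjacent cell and flips it), and then uses the fact that a closed walk in a bipartite graph has even length, so $t$ returns unflipped. Your proof makes that same parity observation explicit as an invariant carried by every $(\alpha,\bv)\in G$, namely $v_p\equiv c(p)+c(\alpha^{-1}(p))\pmod 2$, and verifies it by induction on word length; specializing to $\alpha=\sid$ then kills the kernel. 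The two arguments are the same computation in different clothes, but the invariant form has real advantages: it characterizes $G$ exactly as a subgroup of $\spinmn$ (not merely its intersection with $\ker\projmap$), and your closing reformulation --- that $\alpha\mapsto(\alpha,\bv_\alpha)$ defines a splitting $\phi\colon S_N\hookrightarrow\spinmn$ of the projection with image precisely $G$ --- is the cleanest, most structural way to state the lemma, and the paper does not articulate it. One small presentational note: when you verify the base case and inductive step you implicitly rely on $c(i)+c(j)\equiv 1$ for adjacent $i,j$, which is exactly the bipartiteness of the grid; it would be worth one sentence to say this is where the geometry of $\Sonetwo$ (as opposed to, say, $\Sonethree$) enters.
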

\begin{proof}
Let $\hat\projmap\colon G\to S_N$ be the restriction of $\projmap$ to $G$.
The fact that $\hat\projmap$ is surjective follows from Lemma \ref{lemma:transpositions}, so we only need to show that $\hat\projmap$ is injective.
Let $h$ be any element of the kernel of $\hat\projmap$.
Then $h = s_1\cdots s_k$ is a product of spins in $\Sonetwo$, and $h$ fixes the position of every tile on the $m\times n$ board.
We will show that $h$ also fixes the orientation of every tile, and therefore $h$ is the identity.

Consider tile $t$ in position $t$ on the standard board $b$.
If we apply $h$ to $b$, each spin $s_i$ potentially moves the tile $t$, but if it does, it always moves $t$ to an adjacent position on the board, since $s_i\in\Sonetwo$.
Thus $t$ is moved along some path on the $m\times n$ board (possibly trivial) that must eventually return $t$ to its original position.
The length of this path is necessarily an even integer, therefore $t$ is also returned to its original orientation.
\end{proof}

Let $\TT$ be a subset of the spins in $\SS$.
Generalizing our definition of $k(m,n)$, we define $k(m,n,\TT)$ as the diameter of the Cayley graph $\cay(\spinmn,\TT)$, and consider upper and lower bounds for $k(m,n,\TT)$.
To do so, we introduce a notion of \emph{weight} for a spin, defined the total distance coverd by all the tiles it moves.
\begin{definition}
The \emph{weight} of a rectangle $R$ is $\wt(R) = 2\sum_{p\in R}\rho(p,R)$,
and the weight of a spin~$s$ about $R$ is $\wt(s) = \wt(R)$.
\end{definition}
We may denote the weight of an $i\times j$ rectangle $R$ by $w(i,j)$, since it depends only on the dimensions of $R$, not its location.

\begin{lemma}
Let $\varepsilon:\Z\to \{0,1\}$ be the parity map.  Then
\[
w(m,n) = \frac{1}{2}\bigl(mn^2 + nm^2 - \varepsilon(m)n - \varepsilon(n)m\bigr).
\]
\end{lemma}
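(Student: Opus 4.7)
The plan is to reduce the problem to a one-dimensional sum by exploiting the fact that the $\ell_1$ distance decomposes coordinate-wise, then evaluate the resulting arithmetic progression with a case split on parity.

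First I would set up coordinates. Take the $m\times n$ rectangle $R$ to consist of the unit squares with centers at $(i,j)$ for $1\le i\le m$, $1\le j\le n$. Then the center of $R$ sits at $\bigl(\tfrac{m+1}{2},\tfrac{n+1}{2}\bigr)$, so that $\rho(p,R) = \bigl|i-\tfrac{m+1}{2}\bigr|+\bigl|j-\tfrac{n+1}{2}\bigr|$ for the position $p$ with center $(i,j)$. Since the two summands depend on $i$ alone and $j$ alone,
\[
\sum_{p\in R}\rho(p,R) \;=\; n\sum_{i=1}^{m}\Bigl|i-\tfrac{m+1}{2}\Bigr| \;+\; m\sum_{j=1}^{n}\Bigl|j-\tfrac{n+1}{2}\Bigr|,
\]
so it suffices to evaluate the single-variable sum $f(k):=\sum_{\ell=1}^{k}\bigl|\ell-\tfrac{k+1}{2}\bigr|$ and then multiply by $2$ at the end.

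Next I would compute $f(k)$ by splitting on the parity of $k$. If $k=2t$ is even, then $\tfrac{k+1}{2}=t+\tfrac12$, and the absolute values, for $\ell=1,\dots,2t$, are symmetrically $t-\tfrac12,\,t-\tfrac32,\dots,\tfrac12,\tfrac12,\dots,t-\tfrac12$, giving $f(k) = 2\bigl(\tfrac12+\tfrac32+\cdots+(t-\tfrac12)\bigr) = t^{2} = \tfrac{k^{2}}{4}$. If $k=2t+1$ is odd, then $\tfrac{k+1}{2}=t+1$ is an integer, and the values are $t,t-1,\dots,1,0,1,\dots,t$, giving $f(k)=2\cdot\tfrac{t(t+1)}{2}=t(t+1)=\tfrac{k^{2}-1}{4}$. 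These two cases unify neatly as $f(k)=\tfrac{k^{2}-\varepsilon(k)}{4}$.

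Finally I would substitute back:
\[
\sum_{p\in R}\rho(p,R) \;=\; n\cdot\frac{m^{2}-\varepsilon(m)}{4}+m\cdot\frac{n^{2}-\varepsilon(n)}{4} \;=\; \frac{1}{4}\bigl(mn^{2}+nm^{2}-\varepsilon(m)n-\varepsilon(n)m\bigr),
\]
and doubling (per the definition of $\wt$) yields the claimed formula. There isn't really a hard step here; the only small obstacle is keeping the parity bookkeeping clean so that the two cases collapse into a single formula via $\varepsilon$. Everything else is a direct calculation powered by the separability of the $\ell_1$ distance.
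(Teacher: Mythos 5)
Your proof is correct, and it takes a cleaner route than the paper. The paper evaluates the two-dimensional sum $\sum_{p\in R}\rho(p,R)$ directly, splitting into four cases by the parities of $m$ and $n$; it writes out the even/even and odd/odd cases explicitly and dismisses the two mixed cases as ``similar and left to the reader.'' You instead observe that the $\ell_1$ distance to the center separates coordinate-wise, so the double sum factors as $n\,f(m)+m\,f(n)$ with the one-variable sum $f(k)=\sum_{\ell=1}^k\lvert\ell-\tfrac{k+1}{2}\rvert$, and you then need only two parity cases (one for each of even and odd $k$), which unify at once to $f(k)=\tfrac{k^2-\varepsilon(k)}{4}$. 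The separability step is what lets you handle all four parity combinations of $(m,n)$ uniformly, so no cases are actually left to the reader; this buys you a shorter and more symmetric derivation, at the small cost of introducing the helper function $f$.
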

\begin{proof}
When $m$ and $n$ are both even we have
\[
w(m,n) = 4\cdot 2\left(\sum_{i=1}^{\nicefrac{m}{2}}\sum_{j=1}^{\nicefrac{n}{2}}(i+j-1)\right) = \frac{1}{2}\bigl(mn^2+ nm^2\bigr).
\]
When $m$ and $n$ are both odd we have
\begin{align*}
w(m,n) &= 4\cdot 2\left(\sum_{i=1}^{\frac{m-1}{2}}\sum_{j=1}^{\frac{n-1}{2}}(i+j)\right) + 2\cdot 2\left(\sum_{i=1}^{\frac{m-1}{2}}i + \sum_{j=1}^{\frac{n-1}{2}} j\right)\\
       &= \frac{1}{2}\bigl(mn^2+nm^2 - m - n)\bigr).
\end{align*}
The cases where $m$ and $n$ have opposite parity are similar and left to the reader.
\end{proof}
\begin{lemma}\label{lemma:Tlower}
Let $\TT$ be any set of spins in $\spinmn$.  Then
\[
k(m,n,\TT)\medspace \ge\medspace \frac{w(m,n)}{\max\{\wt(s):s\in\TT\}}
\]
for all $m,n\ge 1$.
\end{lemma}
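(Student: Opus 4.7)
The plan is an amortization argument. I would introduce a potential function $W\colon\spinmn\to\Z_{\ge 0}$ measuring the total $\ell_1$-displacement of tiles from their home positions, locate a distinguished element $g_0\in\spinmn$ with $W(g_0)=w(m,n)$, and show that right-multiplication by a single spin $s$ can change $W$ by at most $\wt(s)$. Starting from $W(\sid)=0$, a trivial induction then implies that any product $s_1\cdots s_k$ of spins in $\TT$ has $W$-value at most $k\cdot\max_{s\in\TT}\wt(s)$; applied to any shortest expression for $g_0$ this gives $w(m,n)\le k(m,n,\TT)\cdot\max_{s\in\TT}\wt(s)$, which is the lemma. (If $\TT$ fails to generate $\spinmn$ then $k(m,n,\TT)=\infty$ and the bound is vacuous.)

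For the potential I would take $W(\alpha,\bv)=\sum_{i=1}^{N}\rho(i,\alpha(i))$. The distinguished element is the spin $g_0$ about the full $m\times n$ rectangle $R$: by the definition of a spin, $g_0$ sends each position $p$ to the unique point at $\ell_1$-distance $2\rho(p,R)$ from it, so $W(g_0)=\sum_{p}2\rho(p,R)=\wt(R)=w(m,n)$.

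The Lipschitz step is where the actual computation lives. Writing $g=(\alpha,\bv)$ and $s=(\beta,\bu)$, the group law gives $gs=(\alpha\beta,\bv^\beta+\bu)$, so tile $i$ sits at position $\alpha(i)$ in the board $g$ and at position $\beta(\alpha(i))$ in the board $gs$. Summing the standard consequence of the triangle inequality $|\rho(i,\alpha(i))-\rho(i,\beta(\alpha(i)))|\le\rho(\alpha(i),\beta(\alpha(i)))$ over all $i$ and substituting $p=\alpha(i)$ (valid because $\alpha$ is a bijection) yields $|W(gs)-W(g)|\le\sum_{p}\rho(p,\beta(p))$. The right-hand sum equals $\wt(s)$: the permutation $\beta=\projmap(s)$ fixes every position outside the rectangle $R_s$, and for $p\in R_s$ the definition of a spin guarantees $\rho(p,\beta(p))=2\rho(p,R_s)$, summing to $\wt(R_s)=\wt(s)$.

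There is nothing genuinely difficult here: the main thing requiring care is keeping the paper's right-action convention $(\alpha\beta)(i)=\beta(\alpha(i))$ and the twisted multiplication law straight while performing the Lipschitz computation, and making sure the change of variables $p=\alpha(i)$ is applied to the correct factor of $gs$. Everything else is bookkeeping.
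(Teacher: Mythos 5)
Your proof takes the same approach as the paper: use the spin about the full $m\times n$ board (which has weight $w(m,n)$) as the hard instance and bound its expression length by weight subadditivity. The paper's proof is much more compressed---it simply asserts $w(m,n)\le\sum\wt(s_i)$ without justification---whereas you supply the underlying argument that makes it true, namely the displacement potential $W$, the identity $W(g_0)=w(m,n)$, and the Lipschitz estimate $|W(gs)-W(g)|\le\wt(s)$ via the triangle inequality; all of these steps check out, including the careful handling of the right-action convention in the change of variables $p=\alpha(i)$.
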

\begin{proof}
Let $b\in \SS_{m\times n}$, with $\wt(b)=w(m,n)$.  If $s_1\cdots s_k$ is a product of spins in $\TT$ equivalent to $b$, then $w(m,n)\le\sum \wt (s_i) \le kw_{\rm max}$.  The lemma follows.
\end{proof}

\begin{lemma}\label{lemma:Tupper}
Every element of $\spinmn$ can be expressed as the product of at most $w(m,n)+mn$ spins in $\Soneone\cup\Sonetwo$.
\end{lemma}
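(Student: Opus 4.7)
The plan is to separate the permutation part from the orientation part.  Writing $(\alpha,\bu)=(\alpha,\bv)(\sid,\bv+\bu)$, once $(\alpha,\bv)$ is expressed as a product of $\Sonetwo$ spins for some resulting $\bv$, the residual $(\sid,\bv+\bu)$ is a product of at most $mn$ spins in $\Soneone$, one for each set bit of $\bv+\bu$.  So it suffices to realize the permutation $\alpha\in S_{mn}$ using at most $w(m,n)$ adjacent transpositions on the $m\times n$ grid, since each $\Sonetwo$ spin projects under $\projmap$ to an adjacent transposition, and we are free to accept whatever orientation vector $\bv$ falls out of our construction.

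Let $\Phi(\alpha):=\sum_{k=1}^{mn}\dist(k,\alpha(k))$ denote the total displacement of $\alpha$.  I would first establish the bound $\Phi(\alpha)\le w(m,n)$ for every $\alpha$, by decomposing $\dist$ into its horizontal and vertical components; in each one-dimensional factor, the matching maximizing total displacement is the reverse permutation, and the two reverses together form the antipodal map on the grid, which realizes the full-board spin weight $w(m,n)$ by definition.

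I would then show $\alpha$ can be realized in at most $\Phi(\alpha)$ adjacent transpositions via insertion sort in reading order: for $k=1,\ldots,mn$, route tile $k$ from its current location to position $k$ through the region of not-yet-placed positions, which at step $k$ is a reading-order staircase complement in which the graph distance coincides with the Manhattan distance (a monotone right/down path out of the remaining region always exists).  The main obstacle is that a swap used to advance tile $k$ can push another tile farther from its eventual target, potentially inflating later work; the total displacement decreases by at most $2$ per swap, so $\Phi(\alpha)/2$ swaps are unavoidable, but the difficulty is bounding the number of ``neutral'' swaps in which one tile approaches while another recedes.  I would control this with an amortized accounting, charging each adjacent transposition to whichever of its two tiles it moves closer to its target, so that each tile bears total charge at most its initial contribution $\dist(k,\alpha(k))$ to $\Phi$.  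Summing yields at most $\Phi(\alpha)\le w(m,n)$ transpositions for the permutation, and together with the at most $mn$ orientation fixes from $\Soneone$ this gives the stated bound $w(m,n)+mn$.
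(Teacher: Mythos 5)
Your decomposition into the permutation part (via $\Sonetwo$) and the orientation correction (via at most $mn$ spins in $\Soneone$) is the same as the paper's, and the target bound of $w(m,n)$ adjacent transpositions for the permutation part is the right one. But the route you take to that bound has a genuine gap.

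The problematic step is the amortized charging scheme: ``charging each adjacent transposition to whichever of its two tiles it moves closer to its target, so that each tile bears total charge at most its initial contribution $\dist(k,\alpha(k))$.'' That last clause is not true. In reading-order insertion sort, a tile that begins in its correct position can be pushed away while an earlier tile is being routed through, and the swap that later returns it is charged to it even though its initial contribution to $\Phi(\alpha)$ was zero. For a concrete instance, take the $3\times 3$ board with $\alpha = (1\ 9)$, so $\Phi(\alpha)=8$, and route tile $1$ from position $9$ to position $1$ along $9\to6\to3\to2\to1$: tiles $6$, $3$, and $2$ are each displaced from their home square by one step, and each is later charged one swap when you fix it, yet each began with $\dist(k,\alpha(k))=0$. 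The charges still sum to something $\le\Phi(\alpha)$ in this example only because tile $9$ happens to be moved closer for free by every corrective swap and so collects no charge at all --- but your argument offers no mechanism to guarantee this kind of cross-subsidy in general. More fundamentally, for any tile $k$ let $T_k$ and $A_k$ count the swaps moving $k$ toward and away from its target; then the number of swaps charged to $k$ is $T_k = \dist(k,\alpha(k)) + A_k$, so the total charge is $\Phi(\alpha) + \sum_k A_k$, which exceeds $\Phi(\alpha)$ whenever any tile ever recedes. The inequality you want would require $\sum_k A_k = 0$, i.e.\ that the sort never moves any tile away from its target, which reading-order insertion does not achieve.

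There is a second, smaller gap: you assert $\Phi(\alpha)\le w(m,n)$ by splitting into row and column displacements and claiming each is maximized by the one-dimensional reversal. That is correct, but it needs an argument (the row coordinate $r\circ\alpha$ is a balanced $m$-valued function, not a permutation of $\{1,\dots,m\}$, so this is an instance of a transportation bound rather than Spearman's footrule directly). In any case the paper sidesteps $\Phi(\alpha)$ entirely.

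The paper's proof uses a different placement order that makes the bookkeeping trivial: it places tiles from the outside in, in decreasing order of $\ell_1$-distance to the center of the board. The invariant is that when you come to a position $p$ at distance $\rho(p,R)$ from the center, every position strictly farther out is already correctly filled, so the tile destined for $p$ currently lies within the ball of radius $\rho(p,R)$ around the center; since both its current location and its target lie in that ball, it can be routed to $p$ in at most $2\rho(p,R)$ adjacent swaps, along a path that never leaves the ball and hence never disturbs the already-placed outer shells. Summing $2\rho(p,R)$ over all positions gives exactly $\wt(R)=w(m,n)$ by the definition of weight, with no amortization and no appeal to $\Phi(\alpha)$. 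Your ``insertion sort'' instinct is sound; what you are missing is that the order in which you insert matters, and shells-by-distance-from-center gives a per-position bound $2\rho(p,R)$ that sums cleanly, whereas reading order does not.
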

\begin{proof}
Let $b\in\spinmn$.  We will construct $b^{-1}=(\alpha,\bu)$ by constructing an element $(\alpha,\bv)$ as a product of at most $w(m,n)$ spins in $\Sonetwo$, to which we may then apply at most $mn$ spins in $\Soneone$ to obtain $(\alpha,\bv)$.

Let $d=m+n-2$.  Then $d$ is the maximum ($\ell_1$) distance between any position and the center of the $m\times n$ rectangle $R$ containing all the positions on the board.
For each position $i$ at distance $d$ from the center (the 4 corners when $mn>1$), we can move tile $i$ to position $i$ using at most $2d$ spins in $\Sonetwo$.
Next we place the correct tiles in positions at distance $d-1$ from the center, and each of these tiles can currently lie at most $d-1$ positions away from the center (since the distance $d$ positions are already filled with the correct tiles), thus we use at most $2(d-1)$ spins in $\Sonetwo$ to place the correct tiles in the positions at distance $d-1$ from the center.
Note that we can do this by moving each tile along a path that does not disturb any tiles that have already been placed.
Continuing in this fashion, we use at most $2\rho(p,R)$ spins to place the correct tile in position $p$, and the total number of spins is at most $\wt(R) = w(m,n)$.
\end{proof}

\begin{corollary}\label{cor:superlinear}
For all $m,n\ge 1$ let $\TT=\TT(m,n)$ be a set of spins with weight bounded by some constant $W$.
Than as $N=mn\to\infty$ we have the asymptotic bound $k(m,n,\TT)= \Theta(mn^2+nm^2)$.
More precisely, for every $\epsilon>0$ there is an $N_0$ such that
\[
\left(\frac{1}{2W}+\epsilon\right)(mn^2+nm^2)\medspace <\medspace k(m,n,\TT)\medspace <\medspace (1+\epsilon)(mn^2+nm^2),
\]
for all $N > N_0$.
\end{corollary}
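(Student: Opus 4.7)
The plan is to split the claim into its two asymptotic inequalities and attack each with one of the two weight lemmas, using the explicit formula for $w(m,n)$ to control the error terms.

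For the lower bound I would apply Lemma \ref{lemma:Tlower} directly: since every spin in $\TT$ has weight at most $W$, we get $k(m,n,\TT) \ge w(m,n)/W$. The weight formula gives $w(m,n) = \tfrac{1}{2}(mn^2+nm^2) - \tfrac{1}{2}(\varepsilon(m)n+\varepsilon(n)m)$, so the correction term is bounded by $(m+n)/2$. Since $mn^2+nm^2 = mn(m+n)$, the ratio of the correction to the leading term is $O(1/mn) \to 0$, so for any $\epsilon > 0$ one can choose $N_0$ with $w(m,n)/W > (\tfrac{1}{2W}-\epsilon)(mn^2+nm^2)$ for all $N > N_0$. (I read the stated ``$+\epsilon$'' as a typo for ``$-\epsilon$'', since $w(m,n)/W$ never exceeds $(mn^2+nm^2)/(2W)$.)

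For the upper bound, the natural interpretation is that $\TT \supseteq \Soneone \cup \Sonetwo$; otherwise one must separately verify that $\TT$ generates $\spinmn$ before the diameter can even be finite. Under this assumption, Lemma \ref{lemma:Tupper} gives $k(m,n,\TT) \le w(m,n) + mn = \tfrac{1}{2}(mn^2+nm^2) + O(mn)$, and since $mn = o(mn^2+nm^2)$ this sits below $(\tfrac{1}{2}+\epsilon)(mn^2+nm^2)$, and hence certainly below $(1+\epsilon)(mn^2+nm^2)$, for all sufficiently large $N$.

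The main obstacle is extending the upper bound to an arbitrary bounded-weight generating $\TT$ that does not contain $\Soneone \cup \Sonetwo$. Because $W$ is fixed, $\TT$ can only consist of spins about rectangles whose dimensions satisfy $w(i,j) \le W$, so only finitely many spin types can appear. The task is then to simulate each single and each double by a product of $O(1)$ elements of $\TT$; once a constant overhead $c = c(W)$ is available, the bound from Lemma \ref{lemma:Tupper} inflates to $c\bigl(w(m,n)+mn\bigr)$, which still lies in $\Theta(mn^2+nm^2)$. The subtle part is controlling this constant tightly enough to keep the leading coefficient below $1+\epsilon$ rather than merely bounded; this is where one would need to exploit the specific structure of small-rectangle spins (or simply accept a weaker constant, which is still enough for the $\Theta$ statement).
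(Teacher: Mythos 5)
Your reading is the intended one: the paper offers no proof, but the Corollary is clearly meant to follow by combining Lemma~\ref{lemma:Tlower} (lower bound) with Lemma~\ref{lemma:Tupper} (upper bound), exactly as you do, and your error-term estimates are correct --- the correction is $O(m+n)$ on the left and $O(mn)$ on the right, both $o(mn^2+nm^2)$ since $mn^2+nm^2 = mn(m+n)$. You are also right that the stated ``$\tfrac{1}{2W}+\epsilon$'' in the lower bound must be a sign typo for $\tfrac{1}{2W}-\epsilon$: Lemma~\ref{lemma:Tlower} gives $k(m,n,\TT)\ge w(m,n)/W$, and $w(m,n)<\tfrac{1}{2}(mn^2+nm^2)$ always, so no ``$+\epsilon$'' can hold for every $\epsilon>0$ (Corollary~\ref{cor:bounds} has the same slip). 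The caveat you flag about the upper bound is also genuine: Lemma~\ref{lemma:Tupper} is proved only for $\TT = \Soneone\cup\Sonetwo$, and the Corollary's hypothesis as written does not even force $\TT$ to generate $\spinmn$, without which $k(m,n,\TT)$ is infinite. Under the natural reading $\TT\supseteq\Soneone\cup\Sonetwo$ with bounded weight --- and using that enlarging the generating set can only shrink the diameter --- your argument is complete, and in fact yields the sharper upper-bound constant $\tfrac{1}{2}+\epsilon$; for an arbitrary bounded-weight generating $\TT$ the $\Theta$ conclusion would survive a simulation overhead, as you sketch, but the explicit constants in the display would not.
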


For $m=n$ this gives a $\Theta(N^{1.5})$ bound, which may be contrasted with the $\Theta(N)$ bound of Corollary \ref{cor:bounds}, where the weight of the spins was unrestricted.
We note that in the case of $\spin_{3\times 3}$ and $\TT=\Soneone\cup\Sonetwo$, Lemmas \ref{lemma:Tlower} and \ref{lemma:Tupper} give the bounds $12 < k(3,3,\TT) < 33$, compared to the actual value $k(3,3,\TT)=25$.

\section{Unique Solutions}\label{section:unique}

Certain elements of $\spinmn$ are distinguished by the fact that they have a unique solution (a unique shortest expression as a product of spins).
This is clearly the case, for example, when $b\in\SS$.
There are many less trivial examples, some 2,203,401 of them in $\spin_{3\times 3}$. 
These include what appear to be the most difficult puzzles in the game, some of which are featured in separate puzzle levels designated as ``uniques".
While these can be quite challenging, knowing that the solution is unique can be an aid to solving such a puzzle.

We begin with a lemma used in the proof of Theorem 3, which also allows us to rule out many possible candidates for a unique solution.

\begin{lemma}\label{lemma:unique}
Let $b=s_1\cdots s_i\cdots s_k$ be a product of spins in $\spinmn$, with $i<k$ and $s_i\in\Soneone\cup\SS_{m\times n}$.
If $s_i\in\Soneone$, then $b$ can be written as $b=s_1\cdots s_{i-1}s_{i+1}\cdots s_k t_i$ with $t_i\in \Soneone$.
If $s_i\in\SS_{m\times n}$, then $b$ can be written as $b=s_i\cdots s_{i-1}t_{i+1}\cdots t_k s_i$, with each $t_j$ a spin of the same type as $s_j$, for $i\le j\le k$.
\end{lemma}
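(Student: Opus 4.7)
The plan is to prove both cases by induction on $k-i$, reducing to a single adjacent swap identity $s_i s_{i+1} = (\cdots) s_i'$, and then iterating.

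For the $\Soneone$ case, I would simply compute in the wreath product. Write $s_i = (\sid,\be_p)$ for some position $p$, and let $s_{i+1}=(\beta,\bw)$. Using the group operation, $s_is_{i+1} = (\beta,\be_p^\beta + \bw)$. Since $\be_p^\beta = \be_{\beta(p)}$ (a direct consequence of the definition $\bv^\beta=(v_{\beta^{-1}(1)},\ldots,v_{\beta^{-1}(N)})$), I can rewrite this as $(\beta,\bw+\be_{\beta(p)}) = (\beta,\bw)(\sid,\be_{\beta(p)}) = s_{i+1}s_i'$, where $s_i' = (\sid,\be_{\beta(p)})\in\Soneone$. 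Thus I can shift $s_i$ one position to the right, replacing it with another spin in $\Soneone$. Iterating from position $i$ to position $k-1$ (each step only modifying the singleton being shifted) produces the claimed expression $s_1\cdots s_{i-1}s_{i+1}\cdots s_k t_i$ with $t_i\in\Soneone$.

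For the $\SS_{m\times n}$ case, I would exploit the fact that since $s_i = s_i^{-1}$, we have the trivial identity $s_is_{i+1} = (s_is_{i+1}s_i)s_i$, so the only thing to verify is that $t_{i+1}:=s_is_{i+1}s_i$ is a spin of the same type as $s_{i+1}$. This is immediate from Proposition \ref{prop:properties}(4): the rectangle $R_i$ of $s_i$ is the entire $m\times n$ board, so it contains the rectangle $R_{i+1}$ of $s_{i+1}$, and the proposition guarantees that the conjugate is a spin whose rectangle has the same shape as $R_{i+1}$ (namely, the $180^\circ$-rotation of $R_{i+1}$). I would then iterate: $s_is_{i+1}s_{i+2}\cdots s_k = t_{i+1}s_is_{i+2}\cdots s_k = t_{i+1}t_{i+2}s_i\cdots s_k$, and so on, where at each stage $t_j := s_i s_j s_i$ has the same spin type as $s_j$ by the same argument (note that $s_i$ itself is preserved through the iteration, so the hypothesis of Proposition \ref{prop:properties}(4) continues to hold).

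Neither case presents a serious obstacle; the proof is essentially bookkeeping once the two swap identities are established. The mild subtlety worth emphasizing is the asymmetry between the two cases: conjugating by a singleton $\Soneone$ spin leaves the permutation part of $s_{i+1}$ alone and only perturbs the vector part by a single bit, which is why $s_i$ can be moved past $s_{i+1}$ with $s_{i+1}$ itself unchanged; whereas conjugating by the full-board spin actively transforms $s_{i+1}$ into a rotated (but same-shaped) spin $t_{i+1}$, which is exactly what the lemma records.
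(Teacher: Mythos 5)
Your proof is correct, and the $\SS_{m\times n}$ half is essentially identical to the paper's (conjugate $s_{i+1},\ldots,s_k$ by $s_i$, invoking Proposition \ref{prop:properties}(4) with the observation that the full-board rectangle contains every other rectangle). For the $\Soneone$ half, however, you take a genuinely cleaner route: rather than the paper's case split---``if $R_i\subseteq R_{i+1}$ then apply Proposition \ref{prop:properties}(4); else $R_i$ and $R_{i+1}$ are disjoint so they commute''---you compute directly in the wreath product: $(\sid,\be_p)(\beta,\bw)=(\beta,\be_{\beta(p)}+\bw)=(\beta,\bw)(\sid,\be_{\beta(p)})$. This unifies the two subcases into a single identity and makes it transparent that conjugating a $\Soneone$ spin past any other spin merely relocates the flipped position from $p$ to $\beta(p)$, without needing to know anything about the geometry of rectangles. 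The paper's approach buys consistency (reusing Proposition \ref{prop:properties}(4) for both halves); yours buys economy and shows that the singleton case is really just a statement about the normal subgroup $V_N$ in the wreath product. Both are fine, and the bookkeeping in your iteration step is handled correctly in each case.
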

\begin{proof}
We first suppose that $s_i\in \Soneone$.  Then the rectangle $R_i$ of $s_i$ contains just a single position.
Let $R_{i+1}$ be the rectangle of $s_{i+1}$.
If $R_i$ is contained in $R_{i+1}$, then by Proposition \ref{prop:properties}, we have $s_{i+1}s_is_{i+1}=t$ with $t\in\Soneone$.
Multiplying on the left by $s_{i+1}$, we have $s_is_{i+1}=s_{i+1}t_i$, allowing us to ``shift" the spin $s_i$ to the right, potentially changing the location of its rectangle but not its type.
If $R_i$ is not contained in $R_{i+1}$ then $R_i$ and $R_{i+1}$ are disjoint and we simply let $t=s_i$, since then $t$ and $s_{i+1}$ commute.
Applying the same procedure to $s_{i+2},\ldots,s_k$, we eventually obtain a product $b=s_1\cdots s_{i-1}s_{i+1}\cdots s_k t_i$ of the desired form (using a potentially different $t$ at each step).

We now suppose that $s_i\in \SS_{m\times n}$.  Then the rectangle $R_i$ of $s_i$ covers the entire $m\times n$ board.
Let $R_{i+1}$ be the rectangle of $s_{i+1}$, which is necessarily contained in~$R_i$.
We then have $s_is_{i+1}s_i=t_{i+1}$, where $t_{i+1}$ is a spin of the same type as $s_{i+1}$, and therefore $s_is_{i+1} = t_{i+1}s_i$.
We may proceed in the same fashion to compute $t_{i+2},\ldots,t_k$, eventually obtaining the desired product $b=s_i\cdots s_{i-1}t_{i+1}\cdots t_k s_i$.
\end{proof}

\begin{proposition}
Suppose that $s_1\cdots s_k$ is the unique solution to a board $b$ in $\spinmn$.
Then the following hold:
\begin{enumerate}
\item None of the $s_i$ are contained in $\Soneone$ or $\SS_{m\times n}$.
\item Consecutive pairs $s_i$ and $s_{i+1}$ have rectangles $R_i$ and $R_{i+1}$ that overlap and do not share a common center, with neither contained in the other.
\end{enumerate}
\end{proposition}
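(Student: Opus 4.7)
The plan is to derive a contradiction with uniqueness by producing, in each forbidden configuration, a second length-$k$ expression for $b$ that differs from $s_1\cdots s_k$ as a sequence of spins. The main tools are Lemma~\ref{lemma:unique} for part~(1) and Proposition~\ref{prop:properties}(3),(4) for part~(2). A recurring observation is that in any minimal expression no two adjacent spins can coincide, since otherwise they would cancel and yield a shorter product; this is what will ensure the rewritten sequences are genuinely distinct.

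For part~(1), first treat $s_i\in\Soneone$ with $i<k$: Lemma~\ref{lemma:unique} rewrites $b$ as $s_1\cdots s_{i-1}\,s_{i+1}\cdots s_k\,t_i$ with $t_i\in\Soneone$. Matching this to the original position-by-position would force $s_{i+1}=s_i$, contradicting the no-adjacent-repeats observation, so the two expressions differ and uniqueness fails. The case $s_i\in\SS_{m\times n}$ with $i<k$ is identical using the second clause of Lemma~\ref{lemma:unique}. To handle $i=k$, pass to $b^{-1}=s_k\cdots s_1$, which must also admit a unique shortest expression (any alternate would reverse to an alternate for $b$); the offending spin now sits at position $1<k$, so the previous argument applies. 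The degenerate case $k=1$ is tacitly excluded, since then $b$ would itself be a single spin in $\Soneone\cup\SS_{m\times n}$.

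For part~(2), analyze consecutive $s_i,s_{i+1}$ by eliminating each way the pair could violate the conclusion. If $R_i$ and $R_{i+1}$ are disjoint or share a common center, Proposition~\ref{prop:properties}(3) makes $s_i,s_{i+1}$ commute; swapping them yields a distinct length-$k$ expression (distinct because $s_i\ne s_{i+1}$ by minimality), contradicting uniqueness. Hence $R_i,R_{i+1}$ overlap with distinct centers. Next, assume $R_i\supseteq R_{i+1}$ (the other direction is symmetric); minimality rules out equality, so $R_i\supsetneq R_{i+1}$, and Proposition~\ref{prop:properties}(4) yields a spin $s_3$ of the same shape as $R_{i+1}$ with $s_is_{i+1}s_i=s_3$, hence $s_is_{i+1}=s_3s_i$. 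Substituting produces $s_1\cdots s_{i-1}\,s_3\,s_i\,s_{i+2}\cdots s_k$, a length-$k$ expression distinct from the original because $s_3$ and $s_i$ have rectangles of different shapes. The one subtlety throughout the argument is confirming that each constructed expression differs from $s_1\cdots s_k$ as a \emph{sequence} of spins rather than merely as a group element; every such check reduces to the minimality observation combined with shape or location distinctions among the rectangles involved, so no deeper obstacle arises.
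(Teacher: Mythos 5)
Your proof is correct and follows essentially the same approach as the paper: part (1) by using Lemma~\ref{lemma:unique} to shift the offending spin and produce a second length-$k$ expression, and part (2) by invoking Proposition~\ref{prop:properties}(3) and~(4) to swap or conjugate adjacent spins. The only additions you make beyond the paper's two-sentence argument are the (correct) reversal trick to handle the $i=k$ case and the explicit note that $k=1$ must be tacitly excluded, both of which the paper leaves implicit.
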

\begin{proof}
(1) follows from Lemma \ref{lemma:unique} and its proof: if $s_i$ were an element of $\Soneone$ or $\SS_{m\times n}$ we could obtain a different expression for $b$ as a product of spins of the same length by ``shifting" $s_i$ either to the left or right.
(2) follows from parts (3) and (4) of Proposition \ref{prop:properties}.
\end{proof}

We conclude with a list of some open problems:
\renewcommand\labelenumi{\theenumi.}
\begin{enumerate}
\item Give a short proof that $k(3,3)=9$.
\item Determine $k(4,4)$.
\item Determine whether $\lim_{n\to\infty} k(n,n) / n^2$ exists, and if so, its value.
\item Analyze the distribution of solution lengths in $\spin_{m\times n}$.
%\item Give necessary and sufficient conditions for $\SS^p$ to generate $\spin^p_{m\times n}$.
\item Determine which spin types $\Sij$ are equivalent.
\item Determine which combinations of spin types generate $\spin_{4\times 4}$.
\item Give bounds on the number of boards with unique solutions in $\spin_{m\times n}$.
\end{enumerate}

%\begin{thebibliography}{McK-Sta}
%\bibitem[GKP]{GKP}
%Ronald L. Graham, Donald E. Knuth, and Oren Patashnik, \emph{Concrete Mathematics}, second edition, Addison-Wesley, 1989.
%\end{thebibliography}

\end{document}